\title{Definable Topological Dynamics for Trigonalizable  Algebraic Groups over $\Q$}
\date{\today}
\author{Ningyuan Yao\\ Fudan University}
\newtheorem{Theorem}{Theorem}[section]
\newtheorem{Thm}{Theorem}[section]
\newtheorem{Prop}[Thm]{Proposition}
\newtheorem{Def}[Thm]{Definition}
\newtheorem{Rmk}[Thm]{Remark}
\newtheorem{Lemma}[Thm]{Lemma}
\newtheorem{Cor}[Thm]{Corollary}
\newtheorem{Fact}[Thm]{Fact}
\newtheorem{Question}[Thm]{Question}
\newtheorem*{Claim}{Claim}
\newtheorem*{Conj1}{Conjecture 1}
\newtheorem*{Conj1'}{Conjecture 1'}
\newtheorem*{Conj2}{Conjecture 2}
\newcommand{\Q}{{\mathbb Q}_p}
\newcommand{\N}{\mathbb N}
\newcommand{\M}{\mathbb M}
\newcommand{\K}{\mathbb K}
\newcommand{\Ga}{{\mathbb G}_a}
\newcommand{\Gm}{{\mathbb G}_m}
\newcommand{\sq}{\subseteq}
\newcommand{\lra}{\longrightarrow}
\DeclareMathOperator{\Th}{Th}
\DeclareMathOperator{\tp}{tp}
\DeclareMathOperator{\cl}{cl}
\DeclareMathOperator{\alg}{{alg}}
\begin{document}

\maketitle
\begin{abstract}
 We study the flow $(G(\Q), S_{G}(\Q))$ of trigonalizable algebraic group acting on its type space, focusing on the problem raised in \cite{P-Y} of whether weakly generic types coincide with almost periodic types if the group has global definable $f$-generic types, equivalently whether the union of minimal subflows of a suitable type space is closed. We will give a description of of $f$-generic types of trigonalizable algebraic groups, and prove that every $f$-generic type is almost periodic.
\end{abstract}

\section{Introduction}
In the recent years there has been growing interest in the interaction between topological
dynamics and model theory. This approach was introduced by Newelski \cite{Newelski},  then developed by a number papers, include \cite{Pillay-TD}, \cite{G. Jagiella}, \cite{CS}, \cite{Y-L} and \cite{P-Y}, and now called definable topological dynamics. Definable topological dynamics studies the action of a group $G$ definable in a structure $M$ on its type space $S_G(M)$ and tries to link the invariants suggested by  topological dynamics (e.g enveloping semigroups,  minimal subflows, Ellis groups...) with model-theoretic invariants. For example, in the case when $Th(M)$ is stable, the enveloping semigroups coincide with the type space of $G$, the minimal subflows coincide with the space of generic types, and Ellis groups also coincide with the space of generic types.

In the original paper \cite{Newelski}, almost periodic types are one of new object suggested by the topological dynamics point of view. A type $p\in S_G(M)$ is almost periodic if the closure of its $G$-orbit is a minimal subflow. Another new object, weakly generic formulas and types were introduced in \cite{N-P} as a substitute for generic formulas and types, since generic types may not always exists in $NIP$ environment. Briefly, a definable set (or formula) $X\sq G$ is weakly generic if there exists some nongeneric definable $Y\sq G$ such that  $X\cup Y$ is generic, where a definable set is generic if finitely many translates cover the whole group.

A nice observation in \cite{Newelski} was that the class of weak generic types in $S_G(M)$
is precisely the closure of the class of almost periodic types. When $G$ is stable, the two classes coincide. So one may naturally ask that whether the two classes coincide in some tame unstable theories. With the assumption that the theory $T$ has $NIP$ and The group is definably amenable,  \cite{CS}  gives a positive answer for Ellis group conjecture asked first by Newelski \cite{Newelski} and  then formulated precisely by Pillay \cite{Pillay-TD}. Moreover, \cite{CS} proved that weakly generic types coincide with $f$-generic types, which happens in stable context. These results  make it reasonable to consider definably amenable  groups  as  the ``stable-like" groups in NIP environment.

So Question 3.35 of \cite{CS} asked that wether weakly generic types coincide with almost periodic types for definably amenable groups in $NIP$ environment. One of the main results of \cite{P-Y} saying that the answer in negative even for $o$-minimal case. On the other hand, \cite{P-Y} proved that any \emph{definable} weakly generic type is almost periodic for definably amenable groups in $NIP$ environment. Based on this positive result, \cite{P-Y} conjectured that
\begin{Conj1}\label{Conjec1}
Let $G$ be a definably amenable group definable in an $NIP$ structure $M$. If $G$ has a global definably weakly generic type, then $p\in S_G(M)$ is weakly generic if and only if it is almost periodic.
\end{Conj1}
Assuming NIP, we say that a group has \emph{dfg} if it is definably amenable and has a global definable $f$-generic (or equivalently, weakly generic) type. We could restate the conjecture as follows:
\begin{Conj1'}\label{Conjec1}
Let $G$ be a group definable in an $NIP$ structure $M$. If $G$ has a \emph{dfg}, then $p\in S_G(M)$ is weakly generic if and only if it is almost periodic.
\end{Conj1'}

The aim of this paper is to prove the Conjecture \ref{Conjec1} in $p$-adic field $\Q$. The advantage of working in the $p$-adic case is that we have a good understanding of \emph{dfg} groups. A recent result of \cite{P-Y2} showing that any \emph{dfg} group  $G$  is eventually trigonalizable over $\Q$. Namely, there exist a finite index subgroup $A\leq G$, a finite subgroup $A_0\leq A$, and  a trigonalizable algebraic group $H$ such that $A/A_0$ is isomorphic to a open subgroup group of $H$. As all trigonalizable algebraic groups are \emph{dfg} (See Corollary \ref{G-is-dfg-G00}), we see that trigonalizable algebraic groups should be the ``maximal" ones among the class of \emph{dfg} groups. Since \cite{P-Y2} is  not avaliable online, we refer to the results from \cite{P-Y2} as conjectures. In this paper, we focus on trigonalizable algebraic groups.

We now highlight our main result as follows:
\begin{Theorem}\label{Main theorem}
Let $N$ be any elementary extension of $(\Q,+,\times,0,1)$, $G$  a trigonalizable algebraic group over $\Q$. Then $p\in S_G(N)$ is weakly generic if and only if it is almost periodic.
\end{Theorem}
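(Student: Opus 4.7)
The implication ``almost periodic implies weakly generic'' is immediate from Newelski's theorem recalled in the introduction, since the weakly generic types form the topological closure of the set of almost periodic types. For the converse, the plan is to exploit that $G$ is dfg: by Corollary \ref{G-is-dfg-G00}, every trigonalizable algebraic group over $\Q$ is dfg, so the Chernikov--Simon theorem in \cite{CS} identifies weakly generic types in $S_G(N)$ with $f$-generic types. It therefore suffices to prove that every $f$-generic type $p \in S_G(N)$ is almost periodic, i.e., that $\overline{G(\Q) \cdot p}$ is a minimal subflow of $S_G(N)$.

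The first step is to give a concrete description of $f$-generic types using the trigonalizable structure, and then to argue almost periodicity by induction on $\dim G$. A trigonalizable algebraic group over $\Q$ admits a normal composition series $\{1\} = G_0 \triangleleft G_1 \triangleleft \cdots \triangleleft G_k = G$ in which each successive quotient $G_{i+1}/G_i$ is isomorphic to $\Ga$ or $\Gm$. This reduces the global analysis of $f$-generic types on $G$ to an iterated analysis on the one-dimensional building blocks, where the $f$-generic types over $\Q$ admit an explicit description in terms of cosets of $G^{00}$ in $G(N)$. The key claim is that an $f$-generic type of $G$ is determined by coherent $f$-generic projections onto each quotient in the filtration.

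For almost periodicity, the induction on $\dim G$ proceeds as follows. The base cases $G = \Ga$ and $G = \Gm$ should follow by direct computation: one verifies that for each of these one-dimensional groups the $G(\Q)$-action on the set of $f$-generic types (parametrized by $G/G^{00}$) is minimal. For the inductive step, take a short exact sequence $1 \to H \to G \to G/H \to 1$ with $H$ one-dimensional and normal in $G$ and $G/H$ trigonalizable of strictly smaller dimension. The restriction map $\pi : S_G(N) \to S_{G/H}(N)$ is $G(\Q)$-equivariant and sends $f$-generics to $f$-generics. Given an $f$-generic $p \in S_G(N)$, its image $q := \pi(p)$ is $f$-generic and almost periodic in $S_{G/H}(N)$ by the inductive hypothesis. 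One then argues that $\overline{G(\Q) \cdot p}$ fibers over $\overline{(G/H)(\Q) \cdot q}$ and each fiber is minimal for the $H(\Q)$-action, yielding minimality of the total orbit closure.

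The main obstacle is the inductive step when $H$ is normal but not central in $G$, so that conjugation by $G(\Q)$ twists the fiberwise $H(\Q)$-action. Trigonalizability is essential here: it forces the conjugation action on $H \cong \Ga$ or $\Gm$ to be by characters of a torus quotient of $G/H$, so the twisting stays multiplicative. Even so, establishing that this twisted action is topologically minimal on the $f$-generic extensions of a given $q$ requires care. A secondary technical point is ensuring that $f$-genericity passes cleanly through the short exact sequence; this should follow from the identity $H \cap G^{00} = H^{00}$ together with an explicit description of how $G^{00}$ sits in $G(N)$ relative to the composition series.
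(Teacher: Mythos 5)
Your first direction and the reduction from ``weakly generic'' to ``$f$-generic'' via dfg are exactly as in the paper. But your converse direction diverges from the paper's argument and, as written, has a genuine gap at its central step. You propose an induction on $\dim G$ along a composition series with one-dimensional quotients, pushing $p$ forward to an almost periodic $q$ on $G/H$ and then claiming that minimality of $\overline{(G/H)(\Q)\cdot q}$ together with minimality of each fiber under the (twisted) $H(\Q)$-action yields minimality of $\overline{G(\Q)\cdot p}$. That implication is false in general: a flow fibering over a minimal flow with minimal fibers need not be minimal (already a product of two minimal flows can fail to be minimal), and for a skew product the situation is no better. This is precisely the hard point of the theorem, and ``requires care'' is where the actual proof has to live. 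You also do not justify that each fiber of $\overline{G(\Q)\cdot p}$ over a point of the base orbit closure is a single $H(\Q)$-orbit closure of an $f$-generic type, nor that the twisting by conjugation preserves whatever minimality you establish fiberwise. Finally, the identity $H\cap G^{00}=H^{00}$ that you invoke is not automatic along an arbitrary composition series; the paper only establishes $H\le G^{00}$ when $H$ admits an $H$-invariant type (which holds for the unipotent part but not for $\Gm$), so the order in which you peel off one-dimensional pieces matters.

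The paper's route is quite different and avoids the fibration issue entirely. It works with the global decomposition $G\cong T\ltimes U$ ($T=\Gm^n$, $U$ unipotent) all at once, and proves a concrete valuation-theoretic criterion (Proposition \ref{f-generics iff}) for $\tp(t,u/N)$ to be $f$-generic: $U(N)$-invariance plus unboundedness of $\nu(t^m)+\nu(f(b^tu))$ over $\Gamma_K$ for $m\neq 0$. The criterion is visibly preserved under taking heirs (Corollary \ref{every heir is f-generic}), using Macintyre's quantifier elimination to check $G^{00}$-invariance of the heir. One then applies a general lemma (Lemma \ref{every heir f-generic imp almost periodic}): if \emph{every heir} of a global type $p$ is $f$-generic, then $G(\M)\cdot p$ is already \emph{closed} --- this follows from Newelski's description of the orbit closure as $\{\tp(a\cdot b/\M)\}$ with $\tp(b/\M,a)$ an heir of $p$, combined with $G^{00}$-invariance of that heir --- hence $p$ is almost periodic, and almost periodicity passes down to $p|N$ by Fact \ref{restriction of ap is ap}. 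If you want to salvage your inductive scheme, you would need to replace ``minimal base and minimal fibers imply minimal total space'' by an argument at the level of heirs or of the enveloping semigroup; the closed-orbit lemma is the mechanism the paper uses for exactly this purpose.
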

The Theorem \ref{Main theorem} partially answered Conjecture \ref{Conjec1}. But with following conjecture
\begin{Conj2}\label{Conjec2}\cite{P-Y2}
Let $G$ be a definable group over $\Q$ with \emph{dfg}. Then there is an trigonalizable algebraic $H$ over $\Q$, a finite index subgroup $A\leq G$, and a finite-to-one homomorphism $f: A\longrightarrow G$ such that  $f(A)\leq H$ has finite index.
\end{Conj2}
 we could conclude from Theorem \ref{Main theorem} that Conjecture \ref{Conjec1} holds in $p$-adic case.

We will assume a basic knowledge of model theory. Good references are \cite{Pzt-book} and \cite{M-book}. Let $\mathbb T$ be a complete theory with infinite models. Its language is $L$ and $\M$ is the monster model, in which every type over a small subset  $A\subseteq \M$ is realized, where ``small" means $|A|<|\M|$.  $M,N, M', N'$ will denote the small elementary submodels of $\M$.  By $x,y,z$ we mean arbitrary $n$-variables and $a,b,c\in \M$  the $n$-tuples in $\M^n$ with $n\in \N$. every formula is an $L_\M$-formula. For an $L_M$-formula $\phi(x)$, $\phi(M)$ denote the definable subset of $M^{|x|}$ defined by $\phi$, and a set $X\subseteq M^n$ is  definable  if  there is an $L_M$-formula $\phi(x)$  such that $X=\phi(M)$. If $\bar X\subseteq \M^n$ is definable, defined with parameters from $M$, then $\bar X(M)$ will denote $\bar X\cap M^n$, the realizations from $M$, which is clearly a definable subset of $M^n$. Suppose that $X\subseteq \M^n$ is a definable set, defined with parameters from $M$, then we write $S_X(M)$ for the space of complete types concentrating on $X(M)$. We use freely basic notions of model theory such as definable type, heir, coheir, .... The book \cite{Pzt-book} is a possible source. Let $A,B$ be subsets of $\M$, and $p\in S(A)$, by $p|B$ we mean an heir of $p$ over $B$ if $A\subseteq B$, and the restriction of $p$ to $B$ if $A\supseteq B$.

The paper is organized as follows. In the rest of this introduction we recall precise definitions and results  from earlier papers, relevant to our results. In section 2.1, we will prove some general results for the closures of the orbit of a global type. In Section 2.2, we will characterise the type-definable connected component of a trigonalizable algebraic groups, and show that every trigonalizable algebraic group has \emph{dfg}.  Section 2.3 contains the main results of the paper, we give an description of the $f$-generic types of  trigonalizable algebraic groups, and showing that every  $f$-generic type is almost periodic.

\subsection{Topological dynamics and Definable Groups}

Our reference for (abstract) topological dynamics is \cite{Auslander}. Given a (Hausdorff) topological group $G$, by a $G$-flow mean a continuous action $G\times X\to X$ of $G$ on a compact (Hausdorff) topological space $X$. We sometimes write the flow as $(X,G)$. Often it is assumed that there is a dense orbit, and sometimes a $G$-flow $(X,G)$ with a distinguished point $x\in X$ whose orbit is dense is called a $G$-ambit.

In spite of $p$-adic algebraic groups being nondiscrete topological groups, we will be treating them as discrete groups so as to have their actions on type spaces being continuous. So in this background section we may assume $G$ to be a discrete group, in which case a $G$-flow is simply an action of $G$ by homeomorphisms on a compact space $X$.

By a subflow of $(X,G)$ we mean a closed $G$-invariant subspace $Y$ of $X$ (together with the action of $G$ on $Y$). $(X,G)$ will always have  minimal nonempty subflows. A point $x\in X$ is \emph{almost periodic} if the closure of its orbit is a minimal subflow.

Let $(X,G)$ and $(Y,G)$ be flows (with the same acting group). A
$G$-homomorphism from $X$ to $Y$ is a continuous map $f: X\longrightarrow Y$  such that $f(gx)=gf(x)$ for all $g\in G$ and $x\in X$.

\begin{Fact}\label{homomorphism of flows}
Let $(X,G)$ and $(Y,G)$ be flows, $f: X\longrightarrow Y$ a $G$-homomorphism. Then $f(x_0)$ is almost periodic whenever $x_0\in X$ is almost periodic.
\end{Fact}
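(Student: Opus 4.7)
The plan is to reduce the claim to the single identity
\[
\overline{G f(x_0)} \;=\; f\bigl(\overline{G x_0}\bigr),
\]
and then transfer minimality across $f$. First I would establish the identity. Since $f$ is a $G$-homomorphism, $f(Gx_0)=Gf(x_0)$. Because $X$ is compact and $f$ is continuous, $f(\overline{Gx_0})$ is compact and hence closed in $Y$; it contains $Gf(x_0)$, so it contains $\overline{Gf(x_0)}$. Conversely, continuity gives $f(\overline{Gx_0}) \subseteq \overline{f(Gx_0)} = \overline{Gf(x_0)}$. This yields the displayed equality.

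Next I would verify that $f(\overline{Gx_0})$ is a subflow of $(Y,G)$: it is closed by the argument above, and it is $G$-invariant because $f$ intertwines the two actions. So it remains to show it is minimal. Suppose $Z \subseteq f(\overline{Gx_0})$ is a nonempty closed $G$-invariant subset. Then $f^{-1}(Z) \cap \overline{Gx_0}$ is closed (by continuity of $f$), $G$-invariant (since both $Z$ and $\overline{Gx_0}$ are), and nonempty. Minimality of $\overline{Gx_0}$ forces $f^{-1}(Z)\cap \overline{Gx_0}=\overline{Gx_0}$, i.e.\ $\overline{Gx_0}\subseteq f^{-1}(Z)$, and applying $f$ gives $f(\overline{Gx_0})\subseteq Z$, so $Z=f(\overline{Gx_0})$. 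Combined with the identity above, $\overline{Gf(x_0)}$ is a minimal subflow of $Y$, so $f(x_0)$ is almost periodic.

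There is no serious obstacle here; the only point that requires a little care is ensuring that $f(\overline{Gx_0})$ is closed, which is where compactness of $X$ (hence of $\overline{Gx_0}$) is used together with continuity of $f$. Everything else is a direct unpacking of the definitions of $G$-homomorphism, subflow, and minimality.
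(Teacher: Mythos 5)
Your proof is correct. The paper states this as a Fact with no proof at all (it is a standard result from abstract topological dynamics, implicitly deferred to the Auslander reference), so there is nothing to compare against; your argument --- establishing $\overline{Gf(x_0)}=f(\overline{Gx_0})$ via compactness of $X$ and continuity of $f$, then pulling back a closed invariant subset to transfer minimality --- is exactly the standard one, and the Hausdorff hypothesis on $Y$ (present in the paper's definition of a flow) is what you need for ``compact implies closed.''
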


Given a flow $(X,G)$,  its enveloping semigroup $E(X)$  is the closure in the space $X^{X}$ (with the product topology) of the set of maps $\pi_{g}:X\to X$, where $\pi_{g}(x) = gx$, equipped with composition $\circ$ (which is continuous on the left).   So any $e\in E(X)$ is a map from $X$ to $X$.
\begin{Fact}\label{Fact-E(x)}
Let $X$ be a $G$-flow. then
\begin{enumerate}
    \item [(i)] $E(X)$ is a $G$-flow, and $E(E(X))\cong E(X)$;
    \item [(ii)] for any $x\in X$, the closure of its $G$-orbit is exactly $E(X)(x)$. In particular, for any $f\in E(X)$, $E(X)\circ f$ is the closure of $G\cdot f$.
\end{enumerate}
\end{Fact}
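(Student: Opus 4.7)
The plan is to handle part (i) by producing a concrete $G$-equivariant homeomorphism $\Phi : E(X) \to E(E(X))$, and to handle part (ii) by exploiting continuity of the evaluation maps $\mathrm{ev}_x : X^X \to X$, $f \mapsto f(x)$, in the product topology.

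For (i), I would first note that $E(X)$ is by definition closed in the compact Hausdorff space $X^X$, hence compact, and the $G$-action $g \cdot f := \pi_g \circ f$ is continuous because, coordinatewise, $(\pi_g \circ f)(x) = g \cdot f(x)$ depends continuously on $f$ (evaluation at $x$ is continuous) while $\pi_g$ is a homeomorphism of $X$. I would then define $\Phi : E(X) \to E(E(X))$ by $\Phi(f)(h) := f \circ h$, with candidate inverse $\Psi(F) := F(\mathrm{id}_X)$, well-defined because $\pi_e = \mathrm{id}_X \in E(X)$. The key check is that $\Phi(f)$ actually lies in $E(E(X))$: for any net $\pi_{g_i} \to f$ in $E(X)$, one has $\lambda_{g_i}(h)(x) = g_i \cdot h(x) \to f(h(x)) = \Phi(f)(h)(x)$ for every $h \in E(X)$ and $x \in X$, so $\lambda_{g_i} \to \Phi(f)$ in $E(X)^{E(X)}$, placing $\Phi(f)$ in the closure of $\{\lambda_g : g \in G\}$. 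The same computation shows $\Phi$ is continuous; $\Psi \circ \Phi = \mathrm{id}_{E(X)}$ is immediate, and $\Phi \circ \Psi = \mathrm{id}_{E(E(X))}$ follows from a parallel net argument with $\lambda_{g_i} \to F$ and $f := F(\mathrm{id}_X)$. A continuous bijection from a compact to a Hausdorff space is a homeomorphism, and $G$-equivariance reduces to $(\pi_g \circ f) \circ h = \pi_g \circ (f \circ h)$.

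For (ii), fix $x \in X$ and observe that $E(X)(x) = \mathrm{ev}_x(E(X))$ is the continuous image of a compact set, hence closed, and contains $G \cdot x$, so $\overline{G \cdot x} \subseteq E(X)(x)$; conversely, any $y = f(x)$ is the limit of $\pi_{g_i}(x) = g_i \cdot x$ along any net $\pi_{g_i} \to f$, giving equality. The ``in particular'' clause follows by applying this to the flow $(E(X), G)$ at the point $f$: the closure of $G \cdot f$ in $E(X)$ is $E(E(X))(f)$, which under the identification of (i) becomes $\{g \circ f : g \in E(X)\} = E(X) \circ f$.

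The main subtlety is confirming that $\Phi(f)$ genuinely lands in $E(E(X))$ and not merely in the larger space of arbitrary functions $E(X) \to E(X)$. This rests on the asymmetric ``left-continuity'' of composition in $X^X$: for fixed $h$, the map $f \mapsto f \circ h$ is continuous in the product topology because each coordinate $(f \circ h)(x) = f(h(x))$ is itself a product-topology evaluation at the point $h(x) \in X$, requiring no continuity of $h$ itself. This asymmetry is what makes $(E(X), \circ)$ only a left-continuous semigroup rather than a jointly continuous one, and it is the only essential point of the argument.
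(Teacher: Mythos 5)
Your proof is correct and is the standard Ellis-semigroup argument; the paper states this Fact as background without proof (citing Auslander), so your write-up is in effect the proof the paper relies on. The only step left implicit is that $E(X)$ is actually $G$-invariant, i.e.\ $\pi_g\circ f\in E(X)$ for $f\in E(X)$; this follows from the same left-continuity you isolate at the end (if $\pi_{g_i}\to f$ then $\pi_{gg_i}=\pi_g\circ\pi_{g_i}\to\pi_g\circ f$), and all other checks --- that $\Phi(f)\in E(E(X))$, the net argument for $\Phi\circ\Psi=\mathrm{id}$, and the compact-to-Hausdorff bijection being a homeomorphism --- are carried out correctly.
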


By an definable group  $G\subseteq \M^n$, we mean that $G$ is an definable set with an definable map $G\times G\rightarrow G$ as its group operation. For convenience, we assume that $G$ is defined by the formula $G(x)$. We say that $G$ is $A$-definable if $G(x)$ is an $L_A$-formula and the group operation is an $A$-definable map.  For any $M\prec \M$ containing the parameters in $G(x)$, $G(M)=\{g\in M^n| g\in G\}$ is a subgroup of $G$.  It is easy to see that $S_G(M)$ is a $G(M)$-flow with a dense orbit $\{\tp(g/M)|\ g\in G(M)\}$. From now on, we will, through out this paper, assume that every formula $\phi(x)$, with parameters in $\M$, is contained in $G(x)$, namely, the subset $\phi(\M)$ defined by $\phi$ is contained in $G$. Suppose that $\phi(x)$ is an $L_M$-formula and $g\in G(M)$, then the left translate $g\phi(x)$ is defined to be $\phi(g^{-1}x)$. It is easy to check that $(g\phi)(M)=gX$ with $X=\phi(M)$.

\begin{Def}
Let notations be as above.
\begin{itemize}
\item A definable subset  $X\subseteq G$ is \emph{generic} if finitely many left translates of $X$ covers $G$. Namely, there are $g_1,...,g_n\in G$ such that $G=\cup_{i\leq i\leq n} g_iX$.
\item A definable subset  $X\subseteq G$ is \emph{weakly generic} if there is a non-generic definable subset $Y$ such that $X\cup Y$ is generic
\item A definable subset  $X\subseteq G$ is \emph{$f$-generic} if for some/any model $M$ over which $X$ is defined and any $g\in G$, $gX$ does not divide over $M$. Namely, for any $M$-indiscernible sequence $(g_i:i< \omega)$, with $g=g_0$, $\{g_iX: i<\omega\}$ is consistent.
\item A formula $\phi(x)$ is generic if the definable set $\phi(\M)$ is generic. Similarly for weakly generic and $f$-generic formulas.
\item A type $p\in S_G(M)$ is generic if every formula  $\phi(x)\in p$ is generic. Similarly for weakly generic and $f$-generic types.
\item A type $p\in S_G(M)$ is almost periodic if $p$ is a almost periodic point of the $G(M)$-flow $S_G(M)$.
\item A global type $p\in S_G(\M)$ is \emph{strongly $f$-generic} over a small model $M_0$ if every left $G$-translate of $p$ does not fork over $M_0$. A global type $p\in S_G(\M)$ is strongly $f$-generic it it is strongly $f$-generic over some small model.
\end{itemize}
\end{Def}

\begin{Fact}\cite{Newelski}\label{WG sq AG}
Let $AP\sq S_G(M)$ be the space of almost periodic types, and $WG\sq S_G(M)$ the space of weakly generic types. Then $WG=\cl(AP)$.
\end{Fact}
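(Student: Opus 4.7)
The plan is to prove both inclusions via the following key auxiliary fact: the family $\mathcal{I}$ of non-weakly-generic formulas is a $G(M)$-invariant ideal in the Boolean algebra of definable subsets of $G$. Downward closure and translation-invariance are routine, while closure under finite unions follows by a case split: if $\phi_1 \vee \phi_2 \vee \psi$ were generic with $\psi$ non-generic, then whether $\phi_2 \vee \psi$ is generic or not forces one of $\phi_1, \phi_2$ to be WG, contradicting non-WG-ness. A useful corollary, obtained by taking $\psi = \neg \phi$ so that $\phi \vee \psi = G$ is trivially generic, is that if $\phi$ is non-WG then $\neg \phi$ is generic. From the ideal property, $WG$ is exactly the set of types avoiding $\mathcal{I}$, so it is a closed $G(M)$-invariant subspace of $S_G(M)$.

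For the inclusion $\cl(AP) \sq WG$: since $WG$ is closed, I reduce to $AP \sq WG$, which I prove by contradiction. Suppose $p \in AP$ contains a non-WG $\phi$, and let $M_0 = \cl(G(M) \cdot p)$ be the corresponding minimal subflow. Since $[\phi] \cap M_0$ is nonempty and open in $M_0$, the standard minimal-flow lemma (every nonempty open subset of a minimal flow has finitely many translates covering it) yields $g_1, \dots, g_n$ with $M_0 \sq \bigcup_i [g_i \phi]$, so every $q \in M_0$ contains $\phi' := \bigvee_i g_i \phi$. The ideal property gives $\phi' \in \mathcal{I}$, hence $\neg \phi'$ is generic, witnessed by translates $k_1, \dots, k_m$ with $\bigvee_l k_l \neg \phi'$ covering $G$. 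Using $G(M)$-invariance of $M_0$, every $q \in M_0$ contains every $k_l \phi'$, hence $\bigwedge_l k_l \phi'$, contradicting that $\bigvee_l k_l \neg \phi'$ lies in every type of $S_G(M)$.

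For the inclusion $WG \sq \cl(AP)$, I reduce to showing that every weakly generic formula $\phi$ belongs to some almost periodic type. Pick $\psi$ non-generic with $\phi \vee \psi$ generic, giving translates $g_1, \dots, g_n$ with $G = \bigcup_i g_i(\phi \vee \psi)$. The finite union $\theta = \bigvee_i g_i \psi$ is still non-generic, because if finitely many of its translates covered $G$ then $\psi$ itself would be generic. A standard compactness argument then produces an AP type $q$ with $\neg \theta \in q$: the closed $G(M)$-invariant set $\bigcap_{g \in G(M)}[\neg g\theta]$ is nonempty by compactness and non-genericity of $\theta$, hence contains a minimal subflow, any point of which is AP and contains $\neg \theta$. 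Since $\bigvee_i g_i \phi \vee \theta$ defines all of $G$ and so lies in every type, $q$ must contain some $g_i \phi$, and then the AP type $g_i^{-1} q$ (using $G$-invariance of $AP$) is the desired type containing $\phi$.

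The main obstacle, in my view, is establishing the ideal structure of non-WG formulas -- particularly closure under finite unions -- and then exploiting it together with the minimal-flow covering lemma to reach the contradiction in the first inclusion. Once this framework is in place, the second inclusion reduces to a fairly direct compactness argument in $S_G(M)$, and the overall proof goes through without needing any further model-theoretic hypothesis on $G$.
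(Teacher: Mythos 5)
Your proof is correct and is essentially the standard argument from Newelski's paper, which this Fact cites without reproducing a proof: the ideal of non-weakly-generic sets, the covering lemma for open subsets of minimal flows, and the compactness argument producing a minimal subflow inside $\bigcap_{g\in G(M)}[\neg g\theta]$ are exactly the ingredients of the original proof. No gaps; the only (cosmetic) point worth noting is that you should fix the convention that the non-generic witness $\psi$ in the definition of weak genericity is taken over $M$, and that genericity witnessed by translates from $G$ can be witnessed by translates from $G(M)$ by elementarity, both of which you implicitly and correctly use.
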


\begin{Fact}\cite{Newelski}\label{Newelski-E(x)-and-closure-of-orbit}
Let $G$ be a group definable in $M$, $N\succ M$ be any $|M|^+$-saturated elementary extension. Let $S_{G,M}(N)$ be the space of all coheirs of types in $S_G(M)$ over $N$. Then enveloping semigroup $E(S_G(M))$ of $S_G(M)$ is isomorphic to $(S_{G,M}(N),*)$ where $*$ is defined as following: for any $p,q\in S_{G,M}(N)$, $p*q=\tp(a\cdot b/N)$ with $a$ realizes $p$ and $b$ realizes $q$, and $\tp(a/N,b)$ is finitely satisfiable in $M$.

\end{Fact}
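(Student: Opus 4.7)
The plan is to exhibit an explicit homeomorphism $\Phi: S_{G,M}(N) \to E(S_G(M))$ and verify that it intertwines $*$ with functional composition. Given $p \in S_{G,M}(N)$, I define $\hat p : S_G(M) \to S_G(M)$ by $\hat p(q) = \tp(ab/M)$, where $b$ realizes $q$ in some large elementary extension and $a$ realizes the unique coheir extension of $p$ to $Nb$ (the extension is unique because coheirs extend uniquely to coheirs over any superset). Independence from the choice of $b$ follows from a routine finite-satisfiability argument: for an $L_M$-formula $\phi(x,y)$, the condition $\phi(x,b)\in\tp(a/Nb)$ can be tested by pushing the coheir property through to an $L_M$-formula in $b$, and this auxiliary formula depends only on $q = \tp(b/M)$.

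Once $\Phi$ is defined I would verify continuity: a subbasic open set of $S_G(M)^{S_G(M)}$ of the form ``$\hat p(q) \ni \psi(x)$'' pulls back to an open subset of $S_{G,M}(N)$ via the same finite-satisfiability description. The map $\Phi$ sends $\tp(g/N)$ to $\pi_g$ for $g \in G(M)$, so it extends the canonical embedding $G(M) \hookrightarrow E(S_G(M))$. Since the realized types $\{\tp(g/N) : g \in G(M)\}$ are dense in $S_{G,M}(N)$ (a standard consequence of the definition of finite satisfiability) and $\{\pi_g : g \in G(M)\}$ is dense in $E(S_G(M))$ by definition, compactness of $S_{G,M}(N)$ and closedness of $E(S_G(M))$ in $S_G(M)^{S_G(M)}$ together force $\Phi$ to surject onto $E(S_G(M))$. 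The semigroup identity $\widehat{p*q} = \hat p \circ \hat q$ is a matter of arranging three realizations $a \models p$, $b \models q$, $c \models r$ with the iterated finite-satisfiability conditions on $\tp(a/Nbc)$ and $\tp(b/Nc)$, and then comparing $\tp(abc/M)$ computed in both orders.

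The main obstacle I anticipate is injectivity, which is where the $|M|^+$-saturation of $N$ must genuinely come in. Suppose $p_1 \neq p_2$ in $S_{G,M}(N)$ are separated by a formula $\phi(x,\bar n)$ with $\bar n$ a tuple from $N$. I need to produce $q \in S_G(M)$ so that $\hat p_1(q) \neq \hat p_2(q)$. Using saturation, prescribed types over $M$ can be realized inside $N$, and by choosing a $G$-valued tuple $b \in G(N)$ whose type over $M$ controls the parameter $\bar n$ (for instance by coding components of $\bar n$ into coordinates of $b$), the distinguishing formula translates into an $L_M$-formula $\psi(x, b)$ on which $p_1$ and $p_2$ disagree; this disagreement propagates to $\hat p_1(q) \neq \hat p_2(q)$ with $q = \tp(b/M)$. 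Making the coding step clean when $\bar n$ lives in auxiliary sorts rather than in $G$ itself, and verifying that the size hypothesis on $N$ produces the required witnessing tuple, is the technical core of the argument.
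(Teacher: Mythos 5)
The paper offers no proof of this statement --- it is imported from \cite{Newelski} as a black box --- so there is nothing internal to compare against; I can only assess your argument on its own terms. Your construction of $\Phi: p\mapsto \hat p$, the continuity check, the density-plus-compactness argument for surjectivity onto $E(S_G(M))$, and the three-realization computation for $\widehat{p*q}=\hat p\circ\hat q$ all follow the standard route and are fine in outline. One local error: the parenthetical claim that \emph{coheirs extend uniquely to coheirs over any superset} is false (already in the random graph a type over $N$ finitely satisfiable in $M$ has many finitely satisfiable extensions to $Nb$). The correct move is to use the $|M|^+$-saturation of $N$ to realize $q$ by some $b\in G(N)$, so that $\tp(a/Nb)=\tp(a/N)=p$ and no extension is needed; well-definedness then reduces to showing independence from the choice of $b\in G(N)$ realizing $q$, which is the finite-satisfiability argument you sketch (if $b\equiv_M b'$ then $\psi(x\cdot b)\wedge\neg\psi(x\cdot b')$ has no solution in $G(M)$, hence cannot lie in $p$).

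The genuine gap is in the injectivity step, and your proposed fix does not close it. The function $\hat p$ records exactly the following data: for each $L_M$-formula $\psi(x)$ and each $b\in G(N)$, whether $\psi(x\cdot b)\in p$. So $\Phi(p_1)=\Phi(p_2)$ if and only if $p_1$ and $p_2$ agree on all formulas of the special form $\psi(x\cdot b)$ with $\psi$ over $M$. Producing by coding \emph{some} formula $\psi(x,b)$ with $b\in G(N)$ on which $p_1,p_2$ disagree is therefore not enough: the separating formula must factor through the group operation. Equivalently, you must show that every externally definable subset of $G(M)$ lies in the Boolean algebra generated by the sets $\{g\in G(M): \models\psi(g\cdot b)\}$ with $\psi\in L_M$ and $b\in G$, and this is precisely the nontrivial content of the statement, not a routine coding exercise. (For instance, if $G$ is the additive group of a vector space sort paired with a second sort, translation by elements of $G$ only produces translates of $M$-definable sets, and misses external sets defined from new parameters in the other sort; this is why Newelski's own development works with the flow of external types, for which the enveloping semigroup is the space itself, and obtains $E(S_G(M))$ only as a surjective image of $S_{G,M}(N)$ in the first instance.) As written, your proof establishes a continuous surjective semigroup homomorphism $S_{G,M}(N)\to E(S_G(M))$ but not the claimed isomorphism.
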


\subsection{NIP, Definably Amenablity, and Connected component}
Let $G\subseteq \M^n$ be a definable group. Recall that a type-definable over $A$ subgroup $H\subseteq \M^n$ is a type-definable subset of $G$, which is also a subgroup of $G$. We say that $H$ has bounded index if $|G/H|<2^{|T|+|A|}$. For groups definable in $NIP$ structures, the smallest type-definable subgroup $G^{00}$ exist (See \cite{NIP}). Namely, the intersection of all type-definable subgroups of bounded index still has bounded index. We call $G^{00}$ the type-definable connected component of $G$. Another model theoretic invariant is $G^0$, called the definable-connected of $G$,  which is the intersection all definable subgroups of $G$ of finite index. Clearly, $G^{00}\leq G^0$.

Recall also that the  Keisler measure over $M$ on $X$  with $X$ a definable set over $M$, is a finitely additive measure on the Boolean algebra of definable subsets over $M$, subsets of $X$. When we take the monster model, i.e. $M=\M$, we call it a global Keisler measure. A definable group $G$ is said to be definably amenable if it admits a global (left) $G$-invariant probability Keisler measure.
\begin{Fact}\cite{CS}\label{amenable}
Assuming $NIP$. If $G\sq \M^n$ is a definable group. Then the following are equivalent:
\begin{enumerate}
  \item [(i)]  $G$ is definably amenable;
  \item [(ii)]  $G$ admits a global type $p\in S_G(\M)$ with bounded $G$-orbit;
  \item [(iii)]  $G$ admits a strongly $f$-generic type.
\end{enumerate}
\end{Fact}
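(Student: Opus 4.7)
The plan is to prove the cycle $(\text{iii})\Rightarrow(\text{ii})\Rightarrow(\text{i})\Rightarrow(\text{iii})$, at each step leaning on a standard NIP tool.

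For $(\text{iii})\Rightarrow(\text{ii})$: let $p\in S_G(\M)$ be strongly $f$-generic over a small $M_0$, so every translate $gp$ does not fork over $M_0$. Under NIP, nonforking of a global type over a model $M_0$ is equivalent to invariance over $M_0$, and the collection of $M_0$-invariant global types in $S_G(\M)$ is bounded in cardinality (at most $2^{|M_0|+|T|}$). Hence the orbit $\{gp : g\in G\}$ is bounded.

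For $(\text{ii})\Rightarrow(\text{i})$: if $p\in S_G(\M)$ has bounded orbit, then its stabilizer $\mathrm{Stab}(p)\leq G$ has bounded index, hence contains $G^{00}$. The orbit map $G\to S_G(\M)$, $g\mapsto gp$, thus factors through the quotient $\pi\colon G\to G/G^{00}$, giving a map $\bar\pi\colon G/G^{00}\to S_G(\M)$. Under NIP, $G/G^{00}$ equipped with the logic topology is a compact Hausdorff topological group, and $\bar\pi$ is Borel. Let $h$ be the normalized Haar measure on $G/G^{00}$. For every $L_\M$-formula $\phi(x)$ define
\[
\mu(\phi) \;=\; h\bigl(\{\xi \in G/G^{00} \,:\, \phi \in \bar\pi(\xi)\}\bigr).
\]
Finite additivity and total mass $1$ follow from the corresponding properties of $h$; $G$-invariance of $\mu$ follows from left-invariance of $h$ together with the $G$-equivariance of $\bar\pi$ (which is immediate from the identity $h\cdot\pi(g)=\pi(hg)$). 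Thus $G$ is definably amenable.

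For $(\text{i})\Rightarrow(\text{iii})$: let $\mu$ be a global left $G$-invariant probability Keisler measure; by a standard argument under NIP we may choose a small $M_0$ over which $\mu$ is invariant. Pick a type $p\in S_G(\M)$ in the support of $\mu$, meaning every $\phi\in p$ has $\mu(\phi)>0$; such $p$ exists because the family of formulas of positive measure has the finite intersection property. For any $g\in G$ and any formula $g\psi\in gp$ with $\psi\in p$, left-invariance gives $\mu(g\psi)=\mu(\psi)>0$. It therefore suffices to show that a positive-measure formula cannot divide over $M_0$: if $\chi(x,b)$ has $\mu(\chi(x,b))=\varepsilon>0$ and $(b_i)_{i<\omega}$ is an $M_0$-indiscernible sequence with $b_0=b$, then each $\chi(x,b_i)$ has measure $\varepsilon$ (by $M_0$-invariance of $\mu$); if $\{\chi(x,b_i):i<\omega\}$ were $k$-inconsistent, the pointwise bound $\sum_{i<n}\mathbf{1}_{\chi(x,b_i)}\leq k-1$ would force $n\varepsilon\leq k-1$ for all $n$, a contradiction. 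Since under NIP dividing equals forking, every translate $gp$ does not fork over $M_0$, and $p$ is strongly $f$-generic.

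The main obstacle is $(\text{ii})\Rightarrow(\text{i})$: it rests on the full NIP theory of the connected component, specifically that $G^{00}$ exists, that $G/G^{00}$ with the logic topology is a compact Hausdorff topological group, and that the induced map $\bar\pi$ is Borel measurable; without Borel measurability the pullback definition of $\mu$ is ill-posed. The other two implications reduce to the nonforking/invariance dictionary under NIP and the positive-measure-implies-nondividing principle, both essentially routine.
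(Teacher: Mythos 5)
This statement is quoted verbatim as a Fact from \cite{CS} and the paper supplies no proof of it, so there is nothing in the text to compare your argument against; I can only judge it on its own terms. Your cycle (iii)$\Rightarrow$(ii)$\Rightarrow$(i)$\Rightarrow$(iii) is the standard architecture, and the two easier legs are essentially sound: (iii)$\Rightarrow$(ii) via the NIP dictionary ``nonforking over a model $=$ invariance over that model'' is correct (though the number of $M_0$-invariant global types is bounded by $2^{2^{|M_0|+|T|}}$, not $2^{|M_0|+|T|}$ --- an invariant type is coded by a map from formulas to subsets of $S_y(M_0)$), and (i)$\Rightarrow$(iii) is the usual support-of-the-measure argument, except that the parenthetical ``by a standard argument under NIP we may choose a small $M_0$ over which $\mu$ is invariant'' is itself a nontrivial lemma of Hrushovski--Pillay on invariant measures, not a routine remark; as stated your pigeonhole needs $\mu(\chi(x,b_i))=\varepsilon$ for all $i$, which you get only after that lemma is secured.

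The genuine gap is in (ii)$\Rightarrow$(i), at the step ``$\mathrm{Stab}(p)$ has bounded index, hence contains $G^{00}$.'' That inference is invalid: $G^{00}$ is the smallest \emph{type-definable} subgroup of bounded index, and a bounded-index subgroup that is not type-definable (nor even invariant over a small set) need not contain $G^{00}$; for an arbitrary global type $p$ with bounded orbit, $\mathrm{Stab}(p)$ is a priori neither. Showing that a bounded-orbit type is in fact $G^{00}$-invariant --- equivalently, that it does not fork over some small model --- is precisely the substantive content of this implication in \cite{CS}, so your argument assumes the hard part rather than proving it. The same missing ingredient underlies the measurability issue you correctly flag: Borel measurability of $\{\xi\in G/G^{00}:\phi\in\bar\pi(\xi)\}$ is deduced in the literature from Borel definability of $M_0$-invariant types under NIP, which again presupposes that $p$ is invariant over some small $M_0$. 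To repair the proof you would need to insert, before the Haar-measure construction, an argument that a global type with bounded orbit does not fork over a small model (and hence is $G^{00}$-invariant); everything downstream of that is fine.
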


Moreover,
\begin{Theorem}\cite{CS}\label{f-generic-type-has-bounded-orbit}
For a definably amenable $NIP$ group $G$, we have
\begin{enumerate}
\item [(i)] Weakly generic definable subsets, formulas, and types coincide with $f$-generic definable subsets, formulas, and types, respectively
\item [(ii)] $p\in S_G(\M)$ is $f$-generic if and only if it has bounded $G$-orbit.
\item [(iii)]  $p\in S_G(\M)$ is $f$-generic if and only if it is $G^{00}$-invariant.
\item [(iv)]  A type-definable subgroup $H$ fixing a global $f$-generic type is exactly $G^{00}$
\item [(v)]  $G/G^{00}$ is isomorphic to the Ellis subgroup of $S_G(M)$ for any  $M\prec \M$.
\end{enumerate}
\end{Theorem}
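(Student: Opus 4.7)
The theorem bundles five equivalences, and I would organise the proof around the central chain $f$-generic $\Leftrightarrow$ bounded $G$-orbit $\Leftrightarrow$ $G^{00}$-invariant, then harvest (iv), (i), and (v) from it. By Fact \ref{amenable} I fix a strongly $f$-generic global type $p_{0}\in S_{G}(\M)$ witnessed over a small model $M_{0}$, and a global $G$-invariant Keisler probability measure $\mu_{0}$.

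\textbf{The central chain and (iv).} For $p\in S_{G}(\M)$ set $\mathrm{Stab}(p)=\{g\in G : gp=p\}$. If $p$ is $G^{00}$-invariant then $\mathrm{Stab}(p)\supseteq G^{00}$ and the orbit $Gp$ is a quotient of $G/G^{00}$, hence bounded. Conversely, if $|Gp|$ is bounded then $\mathrm{Stab}(p)$ has bounded index; Borel-definability of invariant types in NIP lets me write $\mathrm{Stab}(p)$ as the intersection of the type-definable stabilizers of the individual formulas $\phi^{p}$ for $\phi\in p$, so it is type-definable, and minimality of $G^{00}$ forces $\mathrm{Stab}(p)\supseteq G^{00}$. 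This yields (ii)$\Leftrightarrow$(iii). For $f$-generic $\Rightarrow$ bounded orbit: every translate $gp_{0}$ is non-forking over $M_{0}$, and in NIP there are only boundedly many non-forking extensions of types over $M_{0}$, so $p_{0}$ has bounded orbit; since $f$-genericity is preserved by translation the same holds for every $f$-generic. For bounded orbit $\Rightarrow$ $f$-generic, choose a small model $M_{1}\supseteq M_{0}$ over which $p$ is automorphism-invariant (possible because $Gp$ is bounded) and argue via NIP (where fork equals divide, and invariant extensions over a model are non-forking) that every translate of every formula in $p$ fails to divide over $M_{1}$. Part (iv) is then immediate under its natural reading: a type-definable $H$ fixing a global $f$-generic has bounded index and so contains $G^{00}$, while $G^{00}$ itself fixes every $f$-generic, so $G^{00}$ is the canonical such subgroup.

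\textbf{Parts (i) and (v), and the main obstacle.} Part (i) is the deepest step and is the main obstacle. My plan is via Keisler measures: I would show that a formula $\phi$ is $f$-generic iff $\mu(\phi)>0$ for some global $G$-invariant measure $\mu$, and that the same inequality characterises weak genericity. The $f$-generic direction uses that positive measure survives translation (by $G$-invariance) and rules out dividing via the NIP finite approximation of invariant measures; the weakly-generic side uses that $\phi$ fails to be weakly generic exactly when $\phi$ is contained in a ``small'' set (in the sense of complements of generics), which forces $\mu(\phi)=0$ for every invariant $\mu$. The delicate step here is the measure/forking dictionary, which requires the full machinery of Borel-definability and VC-type averaging in NIP. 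Finally, for (v), I would invoke Fact \ref{Newelski-E(x)-and-closure-of-orbit} to identify the enveloping semigroup of $S_{G}(M)$ with the coheir semigroup $(S_{G,M}(\M),*)$, locate a minimal left ideal as the closure of the orbit of an $f$-generic coheir, and send a minimal idempotent $u$ to its $G^{00}$-coset stabilizer; using (iii) one verifies that the induced map $uE(S_{G}(M))u\to G/G^{00}$ is a well-defined group isomorphism.
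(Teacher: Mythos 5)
This statement is not proved in the paper at all: it is quoted verbatim from \cite{CS} (Chernikov--Simon) and used as a black box, so there is no internal argument to compare yours against. Judged on its own terms, your outline does track the broad architecture of the Chernikov--Simon proof (the chain $f$-generic $\Leftrightarrow$ bounded orbit $\Leftrightarrow$ $G^{00}$-invariance, the measure-theoretic characterisation $\mu(\phi)>0$ behind part (i), and the coheir-semigroup/minimal-idempotent analysis behind part (v)). But as written it is a plan, not a proof, and it contains one step that would fail.

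The problematic step is your claim that Borel-definability of invariant types lets you write $\mathrm{Stab}(p)$ as an intersection of \emph{type-definable} $\phi$-stabilizers. Borel-definability only makes each set $\{b:\phi(x,b)\in p\}$ a finite Boolean combination of type-definable sets; the resulting condition ``$gp$ and $p$ agree on all instances of $\phi$'' is an intersection over all parameters of such Boolean conditions and is not type-definable in general, so you cannot invoke minimality of $G^{00}$ to conclude $\mathrm{Stab}(p)\supseteq G^{00}$. The actual route is different: one first shows (via an NIP alternation argument along indiscernible sequences, using that all translates of a strongly $f$-generic $p$ are non-forking over $M_0$) that the map $g\mapsto gp$ factors through $G/G^{000}_{M_0}$, giving $\mathrm{Stab}(p)\supseteq G^{000}\supseteq$ nothing to prove about type-definability; the reverse containment $\mathrm{Stab}(p)\subseteq G^{00}$ then follows cheaply because $p$ concentrates on a single coset of $G^{00}$. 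Similarly, your treatment of (i) correctly identifies the key dictionary (``$\phi$ is $f$-generic iff some invariant Keisler measure gives it positive mass'') but defers exactly the hard work --- constructing the measure $\mu_p$ by integrating an $f$-generic type over $G/G^{00}$ and the VC/approximation arguments --- and (v) is stated only as a strategy. Since the paper itself treats the theorem as an external citation, the right move here is to do the same rather than attempt to reprove it.
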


\begin{Rmk}
Note that Theorem \ref{f-generic-type-has-bounded-orbit} (v) gives a positive answer for Ellis group conjecture asked  by Newelski \cite{Newelski} and Pillay \cite{Pillay-TD}.
\end{Rmk}

\begin{Fact}
Suppose that $G\subseteq \M^n$ is a  definably amenable NIP group. Then every $f$-generic type $p\in S_G(N)$ has an $f$-generic global extension.
\end{Fact}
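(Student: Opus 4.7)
The strategy is to extend $p$ by adjoining the negation of every non-$f$-generic $L_{\M}$-formula on $G$. Write $\mathcal{I}$ for the collection of $L_{\M}$-definable subsets of $G$ that are \emph{not} $f$-generic, and consider the partial type
\[
\Gamma(x)\;:=\;p(x)\;\cup\;\{\neg\psi(x):\psi\in\mathcal{I}\}.
\]
Any completion of $\Gamma$ to a $\tilde p\in S_G(\M)$ clearly extends $p$, and contains no non-$f$-generic formula (else $\neg\psi\in\Gamma\subseteq\tilde p$ for some $\psi\in\tilde p$), so $\tilde p$ is a global $f$-generic extension of $p$. The whole task therefore reduces to showing $\Gamma$ is consistent.

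Consistency of $\Gamma$ will follow by compactness from two closure properties of $\mathcal{I}$: (a) if $\phi\vdash\psi$ and $\psi\in\mathcal{I}$, then $\phi\in\mathcal{I}$; and (b) $\mathcal{I}$ is closed under finite unions. Property (a) is a short check using NIP: if some translate $g\psi$ forks over a small model $M$, then $g\phi\vdash g\psi$ forces $g\phi$ to fork over $M$, and by NIP forking coincides with dividing, so $g\phi$ divides over $M$, showing $\phi\in\mathcal{I}$. Property (b) is the substantive step and uses the theory of Keisler measures for definably amenable NIP groups: by definable amenability there is a global $G$-invariant Keisler measure $\mu$, and by NIP every forking formula has $\mu$-measure zero. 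If $\psi\in\mathcal{I}$, some translate $g\psi$ forks over a small model, hence $\mu(g\psi)=0$, and by $G$-invariance $\mu(\psi)=0$. Thus $\mathcal{I}$ is contained in the ideal of formulas that are $\mu$-null for every $G$-invariant Keisler measure, which is closed under finite unions by finite subadditivity.

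Granting (a) and (b), assume for contradiction that $\Gamma$ is inconsistent. By compactness and the closure of $p$ under conjunctions, there exist a single $\chi(x)\in p$ and $\psi_1,\dots,\psi_n\in\mathcal{I}$ with $\chi\vdash\psi_1\vee\cdots\vee\psi_n$. By (b), $\psi_1\vee\cdots\vee\psi_n\in\mathcal{I}$; then by (a), $\chi\in\mathcal{I}$. This contradicts that $\chi\in p$ and $p$ is $f$-generic, completing the proof.

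\textbf{Main obstacle.} The only non-trivial ingredient is (b), the ideal property of non-$f$-generic definable sets; the rest of the argument is soft, being compactness plus the elementary upward preservation of forking under implication. Property (b) is precisely where definable amenability is used substantively, via the existence of invariant Keisler measures together with the NIP fact that forking formulas are null for such measures---the standard Chernikov--Simon machinery underlying Fact \ref{f-generic-type-has-bounded-orbit}.
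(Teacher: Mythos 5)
Your overall strategy --- reduce everything to showing that the non-$f$-generic definable subsets of $G$ form an ideal, then saturate $p$ against that ideal and complete --- is the right one, and it is essentially what the paper does, except that the paper routes through weak genericity: it uses Newelski's combinatorial fact that the non-weakly-generic sets form an ideal of the Boolean algebra of definable subsets of $G$, extends $p$ avoiding that ideal, and then invokes the Chernikov--Simon equivalence of weakly generic and $f$-generic for definably amenable NIP groups (Theorem \ref{f-generic-type-has-bounded-orbit}(i)). Your step (a) is fine; in fact, with the paper's definition of $f$-genericity via dividing it is immediate: if $\phi\vdash\psi$ then $g_i\phi\vdash g_i\psi$, so inconsistency of $\{g_i\psi: i<\omega\}$ yields inconsistency of $\{g_i\phi: i<\omega\}$, with no need to pass through forking $=$ dividing.

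The gap is in step (b). What you actually prove there is that $\mathcal{I}$ is \emph{contained in} the ideal $\mathcal{J}$ of definable sets that are null for every $G$-invariant Keisler measure; containment in an ideal does not make $\mathcal{I}$ closed under finite unions. To close the argument you need the reverse inclusion, i.e.\ that every $f$-generic definable set has positive measure under \emph{some} $G$-invariant Keisler measure. (Equivalently, you could run the compactness argument with $\mathcal{J}$ in place of $\mathcal{I}$, but then you must know that $p$, being $f$-generic, meets no member of $\mathcal{J}$ --- the same converse.) That converse is true for definably amenable NIP groups, but it is exactly the nontrivial half of the Chernikov--Simon equivalence ($f$-generic $\Leftrightarrow$ weakly generic $\Leftrightarrow$ positive measure for some invariant measure) that the paper's one-line proof leans on. So either cite that equivalence explicitly --- at which point the measure detour is redundant and you may as well argue with the ideal of non-weakly-generic sets, as the paper does --- or supply a proof of the converse; as written, the union-closure of $\mathcal{I}$ is asserted but not established.
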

\begin{proof}
It is easy to see that the collection of all weakly generic definable subset of $G$ forms an ideal of the boolean algebra of the collection of all definable subset of $G$. So every weakly generic type $p\in S_G(N)$ has a weakly generic global extension $\bar p\in S_G(\M)$. This completes the proof as weakly generic coincide with $f$-generic.
\end{proof}

\begin{Fact}\label{restriction of ap is ap}\cite{P-Y}
If $p\in S_G(\M)$ (as a $G(\M)$-flow) is almost periodic, then $p|N\in S_G(N)$ (as a $G(N)$-flow)  is almost periodic for any $N\prec \M$.
\end{Fact}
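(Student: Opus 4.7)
The plan is to exploit the restriction map
\[r\colon S_G(\M)\to S_G(N),\quad \tilde p\mapsto\tilde p|N,\]
which is continuous, surjective, and $G(N)$-equivariant (for $g\in G(N)\subseteq G(\M)$ and $\tilde p\in S_G(\M)$, $r(g\cdot\tilde p)=g\cdot r(\tilde p)$). Regarding $S_G(\M)$ as a $G(N)$-flow via $G(N)\hookrightarrow G(\M)$, $r$ becomes a morphism of $G(N)$-flows carrying $p$ to $p|N$. By Fact~\ref{homomorphism of flows} it therefore suffices to show that $p$ is $G(N)$-almost periodic in the $G(N)$-flow $S_G(\M)$.

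Set $Z:=\overline{G(\M)\cdot p}^{S_G(\M)}$, which by hypothesis is $G(\M)$-minimal. By Zorn's Lemma pick a minimal $G(N)$-subflow $W\subseteq Z$. The set $\overline{G(\M)\cdot W}$ is a non-empty, closed, $G(\M)$-invariant subset of $Z$, so by $G(\M)$-minimality $\overline{G(\M)\cdot W}=Z$, whence $p\in\overline{G(\M)\cdot W}$. Concretely, for every finite subset $\{\phi_1,\dots,\phi_k\}\subseteq p$ there exist $g\in G(\M)$ and $w\in W$ with $\phi_i(gy)\in w$ for all $i$. Once we arrange for such a $g$ to lie in $G(N)$, the element $g\cdot w\in W$ meets the basic neighborhood of $p$ determined by $\{\phi_i\}$, and collecting over all finite subsets yields $p\in\overline{W}=W$, whence $\overline{G(N)\cdot p}^{S_G(\M)}=W$ is $G(N)$-minimal.

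The main obstacle is this replacement of the $G(\M)$-witness $g$ by a $G(N)$-witness. My strategy is to arrange that $w$ is an heir of $w|N$ over $\M$. For such a $w$, the heir property applied to the $L(N)$-formula $\psi(y,z):=G(z)\wedge\bigwedge_i\phi_i(zy)$ with parameter $g\in\M$ (noting $\psi(y,g)\in w$) produces $g'\in N$ with $\psi(y,g')\in w|N\subseteq w$; hence $g'\in G(N)$ and $\phi_i(g'y)\in w$ for all $i$, delivering the desired upgrade.

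The step requiring the most care is therefore the existence of a distinguished $w\in W$ with this heir property. My plan is to use Fact~\ref{Newelski-E(x)-and-closure-of-orbit}, which identifies $E(S_G(N))\cong S_{G,N}(\M)$: minimal idempotents of this Ellis semigroup are coheirs over $N$ with controlled $\M$-behavior, and by acting with such an idempotent on $p|N$ and lifting the result into $Z$ (using that $r$ maps the $G(N)$-orbit closure of $p$ in $S_G(\M)$ onto the $G(N)$-orbit closure of $p|N$ in $S_G(N)$) one secures a $w\in W$ that is an heir of its own $N$-restriction, thereby closing the argument via Fact~\ref{homomorphism of flows}.
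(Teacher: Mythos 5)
The paper imports this statement from \cite{P-Y} as a Fact without proof, so there is no in-house argument to compare against; judged on its own terms, your proposal has a reasonable overall shape (pass from a minimal $G(N)$-subflow $W$ of $Z=\cl(G(\M)\cdot p)$ back to $S_G(N)$ via restriction, and use an heir property to convert $G(\M)$-translates into $G(N)$-translates), but it contains two genuine gaps. First, the parameter-replacement step is unsound as written: the formulas $\phi_1,\dots,\phi_k$ range over the \emph{global} type $p$, so $\psi(y,z)=G(z)\wedge\bigwedge_i\phi_i(zy)$ is an $L(\M)$-formula, not an $L(N)$-formula. The heir property of $w$ over $N$ only lets you move \emph{all} the $\M$-parameters of an $L(N)$-formula into $N$ simultaneously; it does not let you hold the parameters of the $\phi_i$ fixed while relocating $g$. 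Consequently you do not obtain $p\in W$, and the intermediate claim that $p$ itself is almost periodic for the $G(N)$-action on $S_G(\M)$ is not established (it is also far stronger than what is needed). The trick does work if you restrict to $\phi_i\in p|N$, but then all you get is $p|N\in r(W)$, where $r$ is the restriction map; that is in fact enough to finish --- $r(W)=\cl\bigl(G(N)\cdot r(w_0)\bigr)$ is minimal because $r(w_0)$ is almost periodic by Fact \ref{homomorphism of flows}, and any point of a minimal subflow is almost periodic --- so the opening reduction ``it suffices to show $p$ is $G(N)$-almost periodic in $S_G(\M)$'' should be dropped rather than pursued.

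Second, and more seriously, the existence of a point $w\in W$ that is an heir of $w|N$ over $\M$ is precisely the crux, and your sketch does not deliver it. Fact \ref{Newelski-E(x)-and-closure-of-orbit} identifies $E(S_G(N))$ with a space of types that are \emph{finitely satisfiable in} $N$, i.e.\ coheirs, which is the dual notion to heirs; acting by a minimal idempotent on $p|N$ produces an almost periodic type in $S_G(N)$, not a global type in $Z$, and ``lifting the result into $Z$'' has no canonical meaning and no reason to land in $W$, let alone to produce an heir of its own restriction. A closed $G(\M)$-invariant subset of $S_G(\M)$ need not contain any heir of any of its restrictions to $N$, so this step requires a genuine argument --- indeed, in the body of this paper the heir-theoretic difficulty is exactly where the work is done (Corollary \ref{every heir is f-generic} and Lemma \ref{every heir f-generic imp almost periodic}), and only for the specific groups at hand. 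As it stands, the proposal is not a proof.
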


\subsection{Groups definable in $(\Q,+,\times, 0,1)$}
We first give our notations for $p$-adics. By ¡®the $p$-adics¡¯, we mean the field $\Q$. $M$ denotes the structure $(\Q,+,\times, 0,1)$, $\Q^*=\Q\backslash \{0\}$ is the multiplicative group. $\mathbb Z$ is the ordered additive group of integers, the value group
of $\Q$. The group homomorphism $\nu: \Q^*\longrightarrow \mathbb Z$ is the valuation map.  $\M$ denotes a very saturated elementary extension $(\K, +, \times, 0, 1)$ of $M$. $\Gamma_\K$ denotes the value group of $\K$. Similarly, $\K^*=\Q\backslash \{0\}$ is the multiplicative group. We sometimes write $\Q$ for $M$ and $\K$ for $\M$.

For convenience, we use $\Ga$ and $\Gm$ denote the additive group and multiplicative group of field $\M$ respectively.  So $\Ga(M)$ (or $\Ga(\Q)$) and $\Gm(M)$ (or $\Gm(\Q)$) are $(\Q,+)$ and $(\Q^*,\times)$ respectively.

We will be referring a lot to the comprehensive survey \cite{Luc Belair} for the basic model theory of the p-adics. A key
point is Macintyre's theorem \cite{Macintyre} that $\Th(\Q,+,\times, 0,1)$ has quantifier elimination in the language of rings $L_{ring}$ together with new predicates $P_n(x)$ for the $n$-th powers for each $n\in \N^+$. Moreover the valuation is quantifier-free definable in the Macintyre's language $L_{ring}\cap\{P_n|\ n\in \N^+\}$, in particular is definable in the language of rings. (See Section 3.2 of \cite{Luc Belair}.)

The valuation map $\nu$ endows an absolute valuation $|\ |$ on $\Q$: for each $x\in \Q$, $|x|=p^{-\nu(x)}$ if $x\leq 0$ and $|x|=0$ otherwise. The absolute valuation makes $p$-adic field $\Q$  a locally compact topological field, with basis given by the sets $\nu(x-a)\geq n$ for $a\in \Q$ and $n\in \mathbb Z$. For any $X\sq \Q^n$, the ``topological dimension", denoted by $\dim(X)$, is the greatest $k\leq n$ such that the image of $X $ under some projection from $M^n$ to $M^k$ contains an open subset of $\Q^k$. On the other side, as model-theoretic algebraic closure coincides with field-theoretic algebraic closure  (\cite{Hrushovski-Pillay}, Proposition 2.11), we see that for any model of $N$ of $\Th(M)$ the algebraic closure satisfies exchange (so gives a so-called pregeometry on $N$) and there is a finite bound on the sizes of finite sets in uniformly definable families. If $a$ is a finite tuple from $N\models \Th(M)$ and $B$ a subset of $N$ then the algebraic dimension of $a$ over $B$, denoted by $\dim(a/B)$, is the size of a maximal subtuple of a which is algebraically independent over $B$. When  $X\sq \Q^n$  is definable, the algebraic dimension of $X$, denoted by $\alg$-$\dim(X)$, is the maximal $\dim(a/B)$ such that $a\in X$ and $B$ contains the parameters over which  $X$ is defined. It is important to know that when  $X\sq \Q^n$  is definable, then its algebraic-dimension  coincides with its "topological dimension", namely $\dim(X)=\alg$-$\dim(X)$. As a conclusion, for any definable $X\sq \Q^n$, $\dim(X)$ is exactly the algebraic geometric dimension of its Zariski closure.

By a definable manifold $X\sq \Q^n$ over a subset $A\sq \Q$, we mean a  topological space $X$ with a covering by finitely many open subsets $U_1,...,U_m$, and homeomorphisms  of $U_i$ with some definable open $V_i\sq  \Q^n$ for $i=1,...,m$, such that the transition maps are $A$-definable and  continuous. If the transition maps are $C^k$, then we call $X$ a definable $C^k$ manifold over $\Q$ of dimension $n$.   A definable group $G\sq \Q^n$ can be equipped uniquely  with the structure of a definable manifold over K such that the group operation is $C^{\infty}$ (see \cite{Pillay-On fields definable in Qp} and \cite{O-P}). The facts described above work for any $N\models \Th(M)$.

\subsection{$1$-dimensional Groups over $\Q$}
Let $\M=(\K,\times,+,0,1)$ be a saturated extension of $M=(\Q,\times,+,0,1)$, and
\[
(\Gamma_\K,+,<)\succ (\mathbb Z,+,<)
\]
be its valuation group. Recall that $\Gm=(\K^*,\times)$ is the multiplicative group and $\Ga=(\K,+)$ is the additive group. Both $\Gm$ and $\Ga$ are commutative, and hence definably amenable.

\begin{Fact}\label{Gm0 and Ga0}
$\Gm^{00}=\Gm^{0}=\bigcap_{n\in \N^+}P(\K^*)$;  $\Ga^{00}=\Ga^{0}=\Ga$.
\end{Fact}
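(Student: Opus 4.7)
The plan treats the additive and multiplicative cases separately.

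\textbf{Additive case.} For $\Ga^{0}=\Ga$: if $H\le \Ga$ is definable of finite index $d$, then by Lagrange $d\Ga\subseteq H$; since $\K$ has characteristic zero, multiplication by $d$ is a definable bijection of $\Ga$, so $d\Ga=\Ga$ and $H=\Ga$. For $\Ga^{00}=\Ga$, the key observation is that for each $c\in \K^{*}$, scalar multiplication $m_{c}(x)=cx$ is a definable group automorphism of $\Ga$. Since $\Ga^{00}$ is the smallest type-definable subgroup of bounded index, the image $m_{c}(\Ga^{00})$ is again a type-definable bounded-index subgroup, hence contains $\Ga^{00}$; applying the same to $m_{c^{-1}}=m_{c}^{-1}$ yields $m_{c}(\Ga^{00})=\Ga^{00}$. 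Therefore $\Ga^{00}$ is a $\K$-linear subspace of $\K$. The only $\K$-subspaces of $\K$ are $\{0\}$ and $\K$; since $\Ga^{00}$ has bounded index we conclude $\Ga^{00}=\K=\Ga$.

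\textbf{Multiplicative case.} Set $D=\bigcap_{n\in \N^{+}}P_{n}(\K^{*})$. First I would establish $\Gm^{0}=D$: each $P_{n}(\K^{*})$ is a definable subgroup of $\Gm$ of finite index (definability from Macintyre's predicate; finite index by elementary equivalence with $\Q$, where $[\Q^{*}:(\Q^{*})^{n}]<\infty$ is classical), so $\Gm^{0}\subseteq D$; conversely, any definable subgroup $H\le \Gm$ of finite index $d$ contains $P_{d}(\K^{*})$ by Lagrange, giving $D\subseteq H$ and hence $D\subseteq \Gm^{0}$. Next, $\Gm^{00}\subseteq D$ is immediate, since $D$ is type-definable of bounded index (at most $\prod_{n}[\Gm:P_{n}(\K^{*})]$, which is countable).

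The remaining inclusion $D\subseteq \Gm^{00}$, equivalently $\Gm^{0}=\Gm^{00}$, is the main obstacle. The strategy is to show that the compact Hausdorff quotient $\Gm/\Gm^{00}$ is profinite; then it equals the inverse limit of its finite quotients, which correspond exactly to the definable finite-index subgroups of $\Gm$, forcing
\[
\Gm^{00}=\bigcap\{H:H\le \Gm \text{ definable of finite index}\}=\Gm^{0}=D.
\]
Profiniteness of $\Gm/\Gm^{00}$ should follow from the structure of $\K^{*}$: fixing a uniformizer $\pi$, one has $\K^{*}=\pi^{\Gamma_{\K}}\cdot \V^{*}$, and both factors admit cofinal chains of definable subgroups of finite index coming from the $n$-th power maps on the value group part and from the congruence filtration on $\V^{*}$; these descend to a neighborhood basis of clopen subgroups of the identity in $\Gm/\Gm^{00}$. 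Making this last step rigorous---verifying total disconnectedness of the logic-topology quotient---is the technical crux of the proof, and requires combining the $p$-adic structure theory of $\K^{*}$ with the definability of the valuation in the field language.
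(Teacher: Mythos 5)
The paper states this Fact as quoted background and gives no proof of its own, so I am assessing your argument on its merits. The additive case is correct and complete: the Lagrange argument for $\Ga^{0}=\Ga$ works, and the observation that $\Ga^{00}$ is carried to itself by every scalar multiplication $m_c$, hence is a nonzero $\K$-subspace of $\K$ and therefore all of $\K$, is exactly the right argument. The identifications $\Gm^{0}=\bigcap_n P_n(\K^*)=:D$ and $\Gm^{00}\subseteq D$ are also fine (minor slip: $[\Gm:D]$ is of size continuum rather than countable, but it is bounded, which is all you need).

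The genuine gap is $D\subseteq \Gm^{00}$, and the route you sketch does not close it --- it restates it. In the logic topology on $\Gm/\Gm^{00}$ the clopen subgroups are precisely the images of the definable finite-index subgroups of $\Gm$ (the preimage of a clopen subgroup is simultaneously type-definable and a bounded union of definable sets, hence definable by compactness), so the intersection of all clopen subgroups of $\Gm/\Gm^{00}$ is exactly $\Gm^{0}/\Gm^{00}=D/\Gm^{00}$. Asserting that your congruence and power subgroups ``descend to a neighbourhood basis of the identity'' is therefore literally the assertion $D=\Gm^{00}$ that is to be proved; no purely formal manipulation of the subgroup lattice can yield it, since the analogous statement fails for $\mathrm{SO}_2$ over a real closed field, where $G^{00}$ is the infinitesimal neighbourhood of $1$ and is strictly contained in $G^{0}=G$. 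Something specific to $\Q$ must enter. Two standard ways to finish: (a) use the exact sequence $1\to \V^*\to \Gm\to \Gamma_\K\to 0$, where $\V^*$ is definably compact so that $(\V^*)^{00}=(\V^*)^{0}$ by the Onshuus--Pillay/Hrushovski--Peterzil--Pillay analysis of definably compact $p$-adic groups, and $\Gamma_\K^{00}=\Gamma_\K^{0}=\bigcap_n n\Gamma_\K$ by the corresponding Presburger computation; or (b), staying within this paper's toolkit, take the global $\Q$-definable $f$-generic type $\bar p_{0,C}$ of elements infinitesimally close to $0$ lying in a fixed coset $C$ of $D$ (Facts quoted from [PPY] in Section 1.4), check that $g\cdot \bar p_{0,C}=\bar p_{0,gC}$ so that $\mathrm{Stab}(\bar p_{0,C})=D$, and conclude $\Gm^{00}=D$ from Theorem \ref{f-generic-type-has-bounded-orbit}(iv).
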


\begin{Fact}\cite{PPY}\label{complete 1 types over Qp}
The complete $1$-types over $M$ (or $\Q$) are precisely the following:
\begin{enumerate}
  \item [(i)] The realized types $\tp(a/M)$ for each $a\in \Q$.
  \item [(ii)] for each $a\in \Q$ and coset $C$ of $\Gm^0$, the type $p_{a,C}$ saying that $x$ is infinitesimally
close to a (i.e. $\nu(x-a)>n$ for each $n\in \N$), and $(x-a)\in C$.
  \item [(iii)] for each coset $C$ as above the type $p_{\infty, C}$ saying that $x\in C$ and $\nu(x)<n$ for all $n\in \mathbb Z$.
  \item  [(iv)] Any global $1$-type consistent with the partial type $\{\nu(x-a)>\gamma|\ a\in \K,\gamma\in \Gamma_\K\}$  is $\Q$-definable.
  \item  [(v)]  Any global $1$-type consistent with the partial type $\{\nu(x)<\gamma|\  \gamma\in \Gamma_\K\}$  is $\Q$-definable.
\end{enumerate}
\end{Fact}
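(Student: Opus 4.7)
The plan is to apply Macintyre's quantifier elimination in the language $L_{ring} \cup \{P_n : n \in \N^+\}$. By QE, a complete $1$-type $p(x) \in S_1(\Q)$ is pinned down by the vanishing data of each polynomial $f(x) \in \Q[x]$ and, for nonzero such $f$, by the cut $\nu(f(x))$ makes in $\Gamma_\K$ together with the $P_n$-memberships of $f(x)$ for every $n \in \N^+$. Since the valuation and $n$-th-power behavior of $f(x)$ are controlled by the corresponding data for its linear factors $x - a$, the essential content of $p$ is, for each $a \in \Q$, the valuation $\nu(x - a)$ and the coset of $x - a$ modulo $\Gm^0$; recall that $\Gm^0 = \bigcap_n P_n(\K^*)$ by Fact \ref{Gm0 and Ga0}.

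Next I would case-split. If some $f \in \Q[x]$ vanishes at $p$, then $p$ is realized in $\Q$ by a root of $f$, giving (i). Otherwise $\nu(x - a) \in \Gamma_\K$ is well-defined for every $a \in \Q$. \emph{Case A}: some $a \in \Q$ has $\nu(x-a) > n$ for all $n \in \N$; by ultrametricity such $a$ is unique, and for any other $b \in \Q$ the value $\nu(x - b) = \nu(a - b) \in \z$ and the coset $x - b \equiv a - b \pmod{\Gm^0}$ are forced, so $p$ is determined by $a$ together with the coset $C$ of $x - a$ modulo $\Gm^0$, yielding (ii). \emph{Case B}: $\nu(x) < n$ for all $n \in \z$; then $\nu(x - a) = \nu(x)$ for every $a \in \Q$, and $p$ is determined by the coset $C$ of $x$ modulo $\Gm^0$, yielding (iii). \emph{Case C}: neither A nor B. I would rule this out by constructing a Cauchy sequence $(a_i) \subseteq \Q$ with $\nu(x - a_i) =: n_i$ strictly increasing, where $a_{i+1} := a_i + p^{n_i} t_i$ and $t_i \in \z$ is a lift of the $\F_p$-residue of $(x - a_i)/p^{n_i}$. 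Its $p$-adic limit $a \in \Q$ then satisfies $\nu(x - a) > n$ for all $n \in \N$, contradicting the negation of Case A.

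For (iv) and (v), the definability of global types extending the infinitesimal or infinite cuts follows from their explicit descriptions: for a global extension $\tilde p$ of $p_{a, C}$, a formula $\phi(x, \bar b)$ lies in $\tilde p$ iff $\phi(x, \bar b)$ holds for all $x$ in the intersection of a sufficiently small $p$-adic neighborhood of $a$ with $a + C$, and by $p$-adic cell decomposition this is a $\Q$-definable condition in $\bar b$; the analogous ``neighborhood of infinity'' argument handles (v). The main obstacle is Case C --- the exclusion of ``wandering'' non-realized $1$-types --- which rests on the spherical completeness of $\Q$ together with the liftability of $\F_p$-residues to $\z \subseteq \Q$. Without these inputs the classification could a priori admit exotic $1$-types outside the families (i)--(iii).
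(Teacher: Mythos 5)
This statement is quoted as a Fact from \cite{PPY}; the paper itself contains no proof of it, so I can only assess your argument on its own terms. Your overall strategy is the standard one and the skeleton is sound: Macintyre quantifier elimination reduces a complete $1$-type to root data plus $P_n$-coset (hence valuation) data of polynomials, and your Case C construction --- successive approximation using the finite residue field, the discreteness of the value group, and the completeness of $\Q$ --- is exactly the right way to exclude ``wandering'' types and force every non-realized $1$-type over $\Q$ into family (ii) or (iii). (One small point you leave implicit there: once Cases A and B are excluded, each $\nu(x-a_i)$ really is a \emph{standard} integer, because an element of $\Gamma_\K$ bounded between two standard integers is standard; this is what makes the $n_i$ cofinal in $\N$.)

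Two steps need repair. First, the reduction ``the valuation and $n$-th-power behaviour of $f(x)$ are controlled by its linear factors $x-a$'' is not available over $\Q$, which is not algebraically closed; you must treat irreducible factors $g$ of degree $\geq 2$ directly. This is easy but must be said, since otherwise the completeness of $p_{a,C}$ and $p_{\infty,C}$ is not established: in case (ii) one has $g(a)\neq 0$, so $g(x)$ is infinitesimally close to $g(a)$ and lies in the same coset of $P_n(\K^*)$ as $g(a)$ because these cosets are open; in case (iii) the leading monomial dominates, so $g(x)\equiv c\, x^{\deg g}\pmod{P_n}$; and the coset of a product is determined by the cosets of its factors. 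Second, in (iv)--(v) you argue about ``a global extension $\tilde p$ of $p_{a,C}$'', but a general global extension of $p_{a,C}$ need not be definable at all: for instance, the coheir of $p_{a,C}$ over $\K$ forces $\nu(x-a)$ to realize the (non-definable) cut of $\Gamma_\K$ just above $\z$, so the set $\{b:\ \nu(x-a)>\nu(b-a)\}$ in its definition scheme is the set of $b$ with $\nu(b-a)$ standard, which is not definable. The hypothesis of (iv) is consistency with $\nu(x-a)>\gamma$ for \emph{all} $\gamma\in\Gamma_\K$ (infinitesimal proximity over the whole monster, not just over $\Q$); only for such types does your ``eventually true on small balls intersected with the coset'' scheme give a definition, and then the same QE analysis as in (ii)/(iii), run over $\K$, shows the $\phi$-definitions need only the parameter $a$ and coset representatives from $\Q$. (As literally printed the partial type in (iv) is inconsistent if all $a\in\K$ are meant simultaneously; the intended reading is ``for some $a$''.)
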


\begin{Fact}\cite{PPY}\label{complete 1 types over Qp}
Let $p\in S_1(M)$, with the notations as above, then we have
\begin{enumerate}
  \item  [(i)]  $p$ is an $f$-generic/weakly generic type of $S_{\Ga}(M)$ iff $p$ is of form $p_{\infty, C}$.
  \item  [(ii)]  $p$ is an $f$-generic/weakly generic type of $S_{\Gm}(M)$ iff $p$ is of form $p_{\infty, C}$ or $p_{0,C}$.
\end{enumerate}
\end{Fact}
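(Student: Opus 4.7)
The plan is to apply Theorem \ref{f-generic-type-has-bounded-orbit}(iii) together with the Fact just preceding it: $p\in S_G(M)$ is $f$-generic if and only if it admits a $G^{00}$-invariant global extension. Combined with Fact \ref{Gm0 and Ga0}, this reduces (i) to testing $\Ga(\K)$-invariance, and (ii) to testing $\Gm^{00}(\K)=\bigcap_{n\in\N^+}P_n(\K^*)$-invariance, of global extensions of the types listed in (i)--(v) of the previous classification. Items (iv) and (v) there are essential: they provide the unique $\Q$-definable global extension $\bar p_{a,C}$ (resp.\ $\bar p_{\infty,C}$) of each non-realized type, with the explicit defining formulas $\nu(x-a)>\gamma$ for all $\gamma\in\Gamma_\K$ (resp.\ $\nu(x)<\gamma$ for all $\gamma\in\Gamma_\K$) together with the appropriate coset condition.

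For (i), realized types and the types $\bar p_{a,C}$ are immediately ruled out, since translating by a generic $g\in\K$ moves the center $a$ to $a-g$ and produces a distinct type. For $\bar p_{\infty,C}$, pick any $g\in\K$ and observe that if $x$ realizes the type then $\nu(x)<\nu(g)$, so $\nu(x-g)=\nu(x)$; factoring $x-g=x(1-g/x)$, we have $\nu(g/x)$ above every element of $\Gamma_\K$, so $1-g/x\in\Gm^{00}\subseteq\Gm^0$, and therefore $x-g$ lies in the same $\Gm^0$-coset as $x$. Hence both the valuation and coset conditions are preserved, showing $\bar p_{\infty,C}$ is $\Ga$-invariant.

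For (ii), realized types are excluded as before. For $\bar p_{a,C}$ with $a\in\Q^*$, multiplication by $g\in\Gm^{00}$ sends the type to $\bar p_{ga,C}$, and $\nu(ga-a)=\nu(g-1)+\nu(a)$ sits in $\Gamma_\K$, whereas $\bar p_{a,C}$ requires $\nu(x-a)>\gamma$ for all $\gamma\in\Gamma_\K$; thus for any $g\in\Gm^{00}\setminus\{1\}$ (which exists by saturation, e.g.\ $g=1+\epsilon$ for an infinitesimal $\epsilon$ of large enough valuation) the two types differ. For $\bar p_{0,C}$, multiplying by $g\in\Gm^{00}$ turns $\nu(x)>\gamma$ into $\nu(x)>\gamma+\nu(g)$, which is equivalent because $\Gamma_\K+\nu(g)=\Gamma_\K$, and preserves $C$ since $g\in\Gm^0$. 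The argument for $\bar p_{\infty,C}$ is identical with the inequality reversed.

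The main technical point in both parts is the coset-preservation step in the \emph{good} cases: $\Gm^{00}$-multiplications preserve $\Gm^0$-cosets tautologically, while bounded additive perturbations of an infinitely-valued element do so because the rescaling factor $1-g/x$ lies in $\Gm^{00}$. This is where the $\Q$-definability of the global extensions via (iv)--(v) is needed to pin down the scale of $x$ relative to $\Gamma_\K$ precisely enough to make the invariance computation go through.
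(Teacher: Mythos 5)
This statement is quoted from \cite{PPY} and not proved in the paper, so there is nothing to compare against; judging your argument on its own terms, the reduction is the right one (a type over $M$ is $f$-generic iff it has a global $f$-generic, equivalently $G^{00}$-invariant, extension, and then one uses Fact \ref{Gm0 and Ga0} and the classification of $1$-types), and your positive direction is essentially correct modulo one point you only gesture at: to conclude invariance of $\bar p_{\infty,C}$ (resp.\ $\bar p_{0,C}$) from preservation of the valuation cut and the $\Gm^0$-coset, you must know that these two data determine the global type. That requires the Macintyre quantifier-elimination/dominant-term computation (write $f(x)=f_{i_0}x^{i_0}(1+\mu)$ with $\nu(\mu)>\Gamma_\K$, so $P_n(f(x))$ is decided by the coset of $x$), i.e.\ exactly the argument the paper runs in Lemma \ref{tp(u,t) f-generic-III}; this should be spelled out rather than asserted.

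The genuine gap is in the negative direction of (ii). To show $p_{a,C}$ ($a\in\Q^*$) is not $f$-generic for $\Gm$ you must show that \emph{no} global extension is $\Gm^{00}$-invariant, but you test only the $\Q$-definable extension of (iv), and only against elements $g=1+\epsilon$ with $\nu(\epsilon)$ large. Both restrictions matter. Take a global extension $\bar p$ of $p_{a,C}$ realized by $x=a+c$ where $\nu(c)=\delta$ lies in the cut ``just above $\N$'' (i.e.\ $n<\delta<\gamma$ for every standard $n$ and every nonstandard $\gamma\in\Gamma_\K$; such $\bar p$ exists by compactness). For $g=1+\epsilon$ with $\nu(\epsilon)>\N$ one gets $gx=a+cu$ with $\nu(u-1)=\nu(\epsilon)+\nu(a)-\delta>\N$, so $u\in\bigcap_nP_n$, and the dominant-term analysis shows $\tp(gx/\K)=\tp(x/\K)$: this extension \emph{is} invariant under every witness you propose, so your argument does not exclude it. The repair is either (a) to use the other part of $\Gm^{00}(\K)=\bigcap_nP_n(\K^*)$, namely elements $g$ with $\nu(g)\in\bigcap_n n\Gamma_\K$ nonzero (these exist by saturation and destroy every global extension of every $p_{a,C}$ with $a\neq0$, since then $\nu(gx-a)=\nu(a)+\min(\nu(g),0)\leq\nu(a)$ fails $\nu(x-a)>0$); or (b) to bypass global extensions entirely in the negative direction by exhibiting a non-$f$-generic formula in $p_{a,C}$, e.g.\ $\nu(x-a)>\nu(a)$, whose translates along an indiscernible sequence of units with distinct residues are pairwise disjoint and hence divide. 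The same remark applies, more mildly, to part (i): your exclusion of $p_{a,C}$ must be phrased so as to apply to every global extension (it does, since every extension contains $\nu(x-a)>0$ and translating by $1$ contradicts it), not just the definable one.
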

It is well-know that every complete type over $M$ is definable.
\begin{Fact}\cite{PPY}\label{Ga and Gm has dfg}
Let $N\succ M$, $p$  an $f$-generic type/weakly generic type of $S_{\Ga}(N)$ (or $S_{\Gm}(N)$),. Then
\begin{enumerate}
  \item   [(i)]  $p$ is $\emptyset$-definable, which is the unique heir of $p|M$.
  \item   [(ii)]  $p|M$ is an $f$-generic/weakly generic type of $S_{\Ga}(M)$ (or $S_{\Gm}(M)$, respectively).
\end{enumerate}
\end{Fact}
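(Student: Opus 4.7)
We prove (ii) first, then (i).

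For (ii), the classification of complete $1$-types (Fact \ref{complete 1 types over Qp}) goes through verbatim over any elementary extension $N \succ M$: a non-realized $1$-type over $N$ is either of the form $p_{a,C}$ for some $a \in N$ and coset $C$ of $\Gm^0$, or of the form $p_{\infty,C}$. Combined with the characterization of $f$-generics (also Fact \ref{complete 1 types over Qp}), the given $p$ must be $p_{\infty,C}$ (in the $\Ga$ case), or $p_{\infty,C}$ or $p_{0,C}$ (in the $\Gm$ case). Restricting to $M$: the partial types $\{\nu(x) < \gamma : \gamma \in \Gamma_N\}$ and $\{\nu(x) > \gamma : \gamma \in \Gamma_N\}$ weaken to the analogous partial types over $M$ (using $\Gamma_M \subseteq \Gamma_N$), and the coset $C \subseteq \Gm(N)$ descends along the natural map $\Gm(N)/\Gm^0(N) \to \Gm(M)/\Gm^0(M)$. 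Hence $p|M$ is of the same form and is $f$-generic by Fact \ref{complete 1 types over Qp}.

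For (i), choose any global extension $\tilde p \in S_\Ga(\M)$ (or $S_\Gm(\M)$) of $p$ preserving the ``at infinity'' (or ``near $0$'') partial type; such an extension exists because this partial type is consistent with $p$ and, by Fact \ref{complete 1 types over Qp}, already determines $\tilde p$ once the coset of $\Gm^0$ is fixed. By Fact \ref{complete 1 types over Qp}(iv), (v), $\tilde p$ is $\Q$-definable, so $p = \tilde p|N$ inherits a $\Q$-definable schema. To strengthen to $\emptyset$-definability, note that the schema only records $P_n$-coset conditions (for each $n$) and valuation comparisons with parameters; since $\Gm / P_n$ is finite with $\emptyset$-definable coset representatives in Macintyre's language, the schema can be rewritten without parameters. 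Uniqueness of the heir then follows from the general model-theoretic principle that a definable type over $M$ admits a unique definable extension to $N$ (obtained by applying the defining schema to $N$-parameters) which coincides with its unique heir; since $p|M$ is definable and $p$ is itself a definable extension, $p$ must be this unique heir.

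The main obstacle is upgrading $\Q$-definability to genuine $\emptyset$-definability, which requires explicit inspection of the defining schema and verification that the coset representatives in $\Gm / P_n$ are $\emptyset$-definable in Macintyre's language; the remaining bookkeeping (classification of types, descent of cosets from $N$ to $M$, uniqueness of heirs for definable types) is routine given the tools cited.
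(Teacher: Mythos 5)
This statement is quoted as a \textbf{Fact} from \cite{PPY}; the paper gives no proof of it, so your attempt can only be judged on its own terms. Two issues. First, a structural one: part (ii) needs none of your machinery. Weak genericity (equivalently, by Theorem \ref{f-generic-type-has-bounded-orbit}(i), $f$-genericity) of a \emph{formula} is a property of the definable set $\phi(\M)$ it cuts out, independent of the model over which one reads it; since every formula of $p|M$ lies in $p$, the restriction of a weakly generic type is automatically weakly generic. Your detour through a classification of types is unnecessary for (ii) — and that matters because the classification claim you invoke is where the real gap lies.

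The load-bearing assertion ``the classification of complete $1$-types (Fact \ref{complete 1 types over Qp}) goes through verbatim over any elementary extension $N\succ M$'' is false. That classification depends essentially on the local compactness (spherical completeness) of $\Q$: every non-realized bounded type over $\Q$ must concentrate infinitesimally near a point of $\Q$. Over, say, an $\aleph_1$-saturated $N$, the value group $\Gamma_N$ properly extends $\mathbb Z$ and there are many further non-realized types — e.g.\ the generic type of a closed ball of infinite radius $\gamma\in\Gamma_N\setminus\mathbb Z$, which is neither of the form $p_{a,C}$ nor $p_{\infty,C}$. So you cannot conclude from the classification that an $f$-generic $p\in S_{\Ga}(N)$ has the ``at infinity'' form. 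The conclusion is true, but the correct route is through Theorem \ref{f-generic-type-has-bounded-orbit}: extend $p$ to a global $f$-generic $\tilde p$, which is $G^{00}$-invariant; since $\Ga^{00}=\Ga$ (resp.\ $\Gm^{00}=\bigcap_n P_n(\K^*)$ by Fact \ref{Gm0 and Ga0}), invariance forces $\nu(x)<\Gamma_{\K}$ (resp.\ $x$ at infinity or infinitesimally near $0$, in a fixed coset). Only then do Fact \ref{complete 1 types over Qp}(iv),(v) apply to give $\Q$-definability of $\tilde p$. Finally, the upgrade from $\Q$-definability to $\emptyset$-definability — which, together with the identification of $p$ as the heir of $p|M$, is the actual content of part (i) — is only gestured at; you correctly identify it as the main obstacle but do not carry out the quantifier-elimination analysis (leading-term domination at infinity, rational coset representatives for $P_n$) that it requires.
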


One concludes directly from Fact \ref{Ga and Gm has dfg} that both $\Ga$ and $\Gm$ has \emph{dfg}.

\section{Main results}
\subsection{Closure of the orbit of a global type}
\begin{Lemma}\label{closure-of-orbit}
Let $\mathbb T$ be any fisrt-order theory, and $\M$ a very saturated model of $\mathbb T$. Let $G\subseteq \M^n$ be any definable group and $p\in S_G(\M)$. Then the closure of the $G(\M)$-orbot of $p$ is
\[cl(G(\M)\cdot p)=\{\tp(a\cdot b/\M)|a,b\in G(\bar \M),\ b\models p,\ \text{and}\ \tp(b/\M,a)\ \text{is an heir of}\ \tp(b/\M)\},\]
where $\bar \M$ is some $|\M|^+$-saturated elementary extension of $\M$.
\end{Lemma}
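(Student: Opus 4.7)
The plan is to establish both inclusions. Direction $(\supseteq)$ is essentially a direct unpacking of the heir property, while direction $(\subseteq)$ is the substantive step and will be handled by a compactness argument producing $(a,b)\in G(\bar\M)^2$ realizing a carefully chosen partial type over $\M$.

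For $(\supseteq)$: assume $b\models p$, $\tp(b/\M,a)$ is an heir of $p$, and set $q=\tp(ab/\M)$. A basic neighborhood of $q$ is $[\phi]$ for some $L_\M$-formula $\phi(x)\in q$. The $L_\M$-formula $\chi(y,z):=\phi(z\cdot y)\wedge G(z)$ satisfies $\chi(y,a)\in\tp(b/\M,a)$, so the heir property yields $g\in\M^n$ with $\chi(y,g)\in p$; this forces $g\in G(\M)$ and $\phi(g\cdot y)\in p$, i.e.\ $g\cdot p\in[\phi]$. Hence every basic neighborhood of $q$ meets $G(\M)\cdot p$, and $q\in\cl(G(\M)\cdot p)$.

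For $(\subseteq)$: fix $q\in\cl(G(\M)\cdot p)$ and let $\Psi$ denote the set of $L_\M$-formulas $\psi(y,z)$ with $|z|=n$ such that no $a'\in\M^n$ gives $\psi(y,a')\in p$. The key construction is the partial type
\[
\Sigma(x,y)=\{G(x),G(y)\}\cup p(y)\cup\{\phi(xy):\phi\in q\}\cup\{\neg\psi(y,x):\psi\in\Psi\}
\]
over $\M$, engineered so that any realization $(a,b)\in G(\bar\M)^2$ automatically has $b\models p$, $\tp(ab/\M)=q$ (using completeness of $q$), and makes $\tp(b/\M,a)$ an heir of $p$: indeed, if an $L_\M$-formula $\psi(y,z)$ had $\psi(b,a)$ with no $a'\in\M^n$ putting $\psi(y,a')$ in $p$, then $\psi\in\Psi$ would force $\neg\psi(b,a)$, a contradiction.

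It remains to show $\Sigma$ is consistent. A finite fragment is implied by $G(x)\wedge G(y)\wedge p_0(y)\wedge\phi(xy)\wedge\bigwedge_{i\le k}\neg\psi_i(y,x)$ with $p_0\in p$, $\phi\in q$ and $\psi_i\in\Psi$. By the density hypothesis, pick $g\in G(\M)$ with $\phi(g\cdot y)\in p$ and any $b\models p$; then $(g,b)$ satisfies every clause of the fragment, the negations $\neg\psi_i(b,g)$ holding because by definition of $\Psi$ each $\psi_i(y,g)\notin p$, and $p$ is complete. Realizing $\Sigma$ inside the $|\M|^+$-saturated extension $\bar\M$ produces the desired $(a,b)$. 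The main subtlety, and the crux of the whole argument, is the observation that $\Psi$ is oblivious to the specific choice of $g$: no $\psi\in\Psi$ admits any $\M$-instance in $p$, so the heir obstructions cost nothing, and the density hypothesis single-handedly handles both the $\phi(xy)$ and the $\neg\psi(y,x)$ clauses.
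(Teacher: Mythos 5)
Your proof is correct, and it takes a genuinely different route from the paper's. The paper deduces the lemma from Newelski's identification of the enveloping semigroup $E(S_G(\M))$ with the space $S_{G,\M}(\bar\M)$ of coheirs under the $*$-product (Facts \ref{Fact-E(x)} and \ref{Newelski-E(x)-and-closure-of-orbit}): it lifts $p$ to a coheir extension $\bar p$ over $\bar\M$, computes $\cl(G(\M)\cdot\bar p)=S_{G,\M}(\bar\M)*\bar p$ inside the semigroup, pushes the result down along the restriction map $\pi$, and then needs an extra invariance argument to check that the types $\tp(\bar a\cdot\bar b/\M)$ so obtained are exactly the $\tp(a\cdot b/\M)$ in the statement. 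You bypass the semigroup entirely: your $(\supseteq)$ direction is the standard unpacking of the heir property (equivalently, finite satisfiability of $\tp(a/\M,b)$ in $\M$, which is how the paper phrases the same condition), and your $(\subseteq)$ direction replaces the semigroup computation with one compactness argument, the partial type $\Sigma$ simultaneously encoding ``$b\models p$'', ``$ab\models q$'' and the failure of every heir obstruction, with finite satisfiability witnessed by the density of $G(\M)\cdot p$ near $q$ together with completeness of $p$ (which makes the clauses $\neg\psi(y,g)$ for $\psi\in\Psi$ come for free once $g\in G(\M)$). The paper's approach has the advantage of placing the lemma inside the Ellis-semigroup framework used throughout Section 2.1; yours is shorter, purely first-order, and independent of Fact \ref{Newelski-E(x)-and-closure-of-orbit}. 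I see no gap.
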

\begin{proof}
Let $S_{G,\M}(\bar \M)$ be the collection of all the coheir extensions of types in $S_G(\M)$. By Fact \ref{Newelski-E(x)-and-closure-of-orbit}, $S_{G,\M}(\bar \M)$ is the envoloping semigroup of $S_G(\M)$,  which is also a $G(\M)$-flow.
Let $\bar p\in S_{G,\M}(\bar \M)$ be any extension of $p$. Then, by Fact \ref{Fact-E(x)}, the closure of $G(\M)$-orbit of $\bar p$ is
\[S_{G,\M}(\bar \M)*\bar p=\{q*\bar p|q\in S_{G,\M}(\bar \M)\},\]
where $q*\bar p=\tp(\bar a\cdot \bar b/\bar \M)$ with $\bar a\models q$, $\bar b\models \bar p$, and $\tp(\bar a/\bar \M,\bar b)$ is finitely satisfiable in $\M$.

It is easy to see that $\pi: S_{G,\M}(\bar \M)\lra S_G(\bar \M)$ defined by $p\mapsto p|\M$ is a $G(\M)$-homomorphism.
We claim that
\begin{Claim}
$\cl(G(\M).p)= \pi(\cl(G(\M).\bar p))$
\end{Claim}
\begin{proof}
Since both $S_{G,\M}(\bar \M)$  and $S_G(\M)$ are compact and Hausdoff, we see that $\pi(\cl(G(\M).\bar p))$ is closed. Since $G(\M).p\sq \pi(\cl(G(\M).\bar p))$, we have
\[\cl(G(\M).p)\sq\pi(\cl(G(\M).\bar p)).\]
Conversely, if $\bar q\in \cl(G(\M).\bar p)$, then for any $L_{\bar \M}$-formula $\phi\in \bar q$ there is $g\in G(\M)$ such that $\phi\in g\bar p$. Take any $L_\M$-formula $\psi\in \pi(\bar q)=\bar q|\M$, there is $g\in G(\M)$ such that $\psi\in g\bar p$, thus $\psi\in g\bar p|\M= gp$. So $\pi(\bar q)\in \cl(G(\M)\cdot p)$ as required.
\end{proof}

By the above Claim, we conclude that
\[cl(G(\M)\cdot p)\subseteq \{\tp(a\cdot b/\M)|a,b\in G(\bar \M),\ b\models p,\ \text{and}\ \tp(b/\M,a)\ \text{is an heir of}\ \tp(b/\M)\}.\]

On the other side, suppose that $a,b\in G(\bar \M)$ and $b\models p$ such that $\tp(a/\M,b)$ is finitely satisfiable in $\M$. We now show that $\tp(a\cdot b/\M)\in \cl(G(\M)\cdot p)$.

Let $\bar p=\tp(\bar b/\bar \M)\in S_{G,\M}(\bar \M)$ be an extension of $p$ and $\tp(\bar a/\bar \M)\in S_{G,\M}(\bar \M)$ an extension of $\tp(a/\M,b)$ such that $\tp(\bar a/\bar \M, \bar b)$ is finitely satisfiable in $\M$. Then
\[\tp(\bar a\cdot \bar b/\bar \M)=\tp(\bar a/\bar \M)*\tp(\bar b/\bar \M)\in \cl(G(\M)\cdot \bar p),\]
and
\[\pi(\tp(\bar a\cdot \bar b/\bar \M))=\tp(\bar a\cdot \bar b/\M)\in cl(G(\M)\cdot p).\]
Let $\phi(x,y)\in {\cal L}_{\M}$, and ${\cal N}\succ \bar \M$ be any $|{\bar \M}|^+$-saturated model, we see that
\[{\cal N}\models \phi(a,b)\Longrightarrow \phi(x,b)\in \tp(a/\M,b) \Longrightarrow \phi(x,b)\in \tp(\bar a/\bar \M,\bar b).\]
Since $\tp(\bar a/\bar \M,\bar b)$ is finitely satisfiable in $\M$, $\tp(\bar a/\bar \M,\bar b)$ is $\M$-invariant. As $\tp(b/\M)=\tp(\bar b/\M)$, we have $\phi(x,\bar b)\in \tp(\bar a/\bar \M,\bar b)$, and hence ${\cal N}\models \phi(\bar a,\bar b)$. This implies that $\tp(a,b/\M)=\tp(\bar a,\bar b/\M)$. So \[
tp(a\cdot b/\M)=\tp(\bar a\cdot \bar b/\M)\in cl(G(\M)\cdot p)\]
as required.
\end{proof}

\begin{Lemma}\label{every heir f-generic imp almost periodic}
Let $\M$ be a saturated model with $NIP$, and $G\subseteq \M^n$ is a definably amenable group. If every heir of $p\in S_G(\M)$ is $f$-generic. Then $G(\M)\cdot p$ is closed. In particular, $p$ is almost periodic.
\end{Lemma}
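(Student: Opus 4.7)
The plan is to apply Lemma~\ref{closure-of-orbit} and exploit the hypothesis to show every element of the closure $\cl(G(\M)\cdot p)$ already lies in the orbit. By that lemma, an arbitrary $q\in\cl(G(\M)\cdot p)$ has the form $q=\tp(a\cdot b/\M)$ with $a,b\in G(\bar\M)$, $b\models p$, and $\tp(b/\M,a)$ an heir of $p$. Using the standard extendibility of heirs, I would first replace $b$ (working in a larger model if necessary) so that $\tilde q:=\tp(b/\bar\M)$ is itself an heir of $p$. This modification does not change $\tp(a\cdot b/\M)$, since $\tp(b/\M,a)$ is preserved.

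By the hypothesis, $\tilde q$ is $f$-generic, hence $G^{00}$-invariant by Theorem~\ref{f-generic-type-has-bounded-orbit}(iii). The crucial step is then to produce $g\in G(\M)$ with $g^{-1}a\in G^{00}(\bar\M)$, i.e.\ $a\in g\cdot G^{00}(\bar\M)$. Granting this, $G^{00}$-invariance gives
\[
a\cdot\tilde q \;=\; g\cdot(g^{-1}a)\cdot\tilde q \;=\; g\cdot\tilde q,
\]
so $\tp(a\cdot b/\bar\M)=\tp(g\cdot b/\bar\M)$, and restricting to $\M$ yields $\tp(a\cdot b/\M)=\tp(g\cdot b/\M)=g\cdot p$, because $g\in G(\M)$ and $b\models p$. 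Thus $q=g\cdot p\in G(\M)\cdot p$, and $G(\M)\cdot p$ is closed.

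The ``in particular'' clause is then immediate: $G(\M)\cdot p$ is a nonempty closed $G(\M)$-invariant subset, hence a subflow; but since it is a single orbit, any nonempty closed $G(\M)$-invariant subset $Y$ of it contains the full $G(\M)$-translate of any of its points and therefore equals $G(\M)\cdot p$. So $G(\M)\cdot p$ is a minimal subflow, and $p$ is almost periodic.

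The main obstacle is the coset step: showing that $G(\M)$ meets every $G^{00}(\bar\M)$-coset in $G(\bar\M)$. This is a standard consequence of the saturation of $\M$ together with the bounded index of $G^{00}$ (the quotient $G/G^{00}$ is ``absolute'' under sufficiently saturated elementary extensions), but it is the step that genuinely uses both NIP (so that $G^{00}$ exists and is type-definable over $\M$) and the saturation of $\M$.
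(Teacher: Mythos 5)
Your proposal is correct and follows essentially the same route as the paper: apply Lemma~\ref{closure-of-orbit}, extend $\tp(b/\M,a)$ to an heir of $p$ over $\bar\M$, use the hypothesis plus Theorem~\ref{f-generic-type-has-bounded-orbit}(iii) to get $G^{00}(\bar\M)$-invariance, and replace $a$ by a representative $g\in G(\M)$ of its $G^{00}(\bar\M)$-coset (which exists since $G/G^{00}$ is bounded and $\M$ is saturated). The paper's proof is the same argument, stated slightly more tersely.
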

\begin{proof}
Let $\bar\M\succ \M$ be the any $|\M|^+$-saturated model. By Lemma \ref{closure-of-orbit}, $cl(G(\M)\cdot p)$ is the collection of types of form $\tp(a\cdot b/\M)$ with $a,b\in G(\bar\M)$,  $b\models p$, and $\tp(b/\M,a)$ is a heir of $p$. Let $\tp(\bar b/\bar \M)$ be an heir extension of $p$ such that
\[
\tp(b/\M,a)\subseteq \tp(\bar b/\bar \M).
\]
Since $\tp(\bar b/\bar \M)$ is $f$-generic, it is $G^{00}(\bar \M)$-invariant. Let $a'\in G(\M)$ such that
\[a'G^{00}(\bar \M)=aG^{00}(\bar \M),\]
we have $\tp(a\cdot \bar b/\bar \M)=\tp(a'\cdot \bar b/\bar\M)$. This implies that
\[\tp(a\cdot b/\M)=\tp(a\cdot \bar b/\M)=\tp(a'\cdot \bar b/\M)=a'\cdot \tp(\bar b/\M)=a'\cdot p\in G(\M)\cdot p.\]
\end{proof}

\subsection{$G^{00}$ and global definable $f$-generic types}
We now assume that $M=(\Q,+,\times,0,1)$, $\M=(\K,+,\times,0,1)$ is a very saturated elementary extension of $M$, and $\Gamma$ is the valuation group of $\K$. Every definable group $G$ is defined in the saturated model $\M$, and defined by the formula $G(x)$ with parameters from $\Q$. For any $N\succ M$, $G(N)$ is the realizations of $G(x)$ in $N$, which is a definable group in $N$. We say that a type $p\in S_G(M)$ is  $G(M)$-invariant type if $gp=p$ for all $g\in H(M)$. It is easy to see that every heir of $p$ over any model $N\succ M$ is $G(N)$-invariant. We say that $G$ has a $G$-invariant type if there is $p\in S_G(M)$ which is $G(M)$-invariant. By Theorem \ref{f-generic-type-has-bounded-orbit},  $G$ has a  $G$-invariant type if and only if $G$ is definably amenable and $G^{00}=G$.

\begin{Lemma}\label{intergal-f-generic-type}
Let $G\subseteq \M^n$, $H\subseteq \M^k$, and $T\subseteq \M^l$ be definable groups such that $H\unlhd G$ and
\[
1\longrightarrow H  \xlongrightarrow {i}  G  \xlongrightarrow {\pi}  T \longrightarrow 1
\]
is a short exact sequence.  Suppose that both $H$ and $T$ have \emph{dfg}, and $H$ has an $H$-invariant type. Then $G$ has \emph{dfg}. Moreover $H\leq G^{00}$ and $\pi(G^{00})={T}^{00}$.
\end{Lemma}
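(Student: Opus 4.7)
My plan is to construct a global definable $f$-generic type $p_G$ on $G$ by combining a definable $H$-invariant type on $H$ with a definable $f$-generic type on $T$, and then read off the structural claims from the stabilizer of $p_G$.

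Since $H$ admits an $H$-invariant global type, which has bounded orbit and hence is $f$-generic by Theorem \ref{f-generic-type-has-bounded-orbit}(ii), Theorem \ref{f-generic-type-has-bounded-orbit}(iv) applied to $H$ itself gives $H^{00}=H$; by Theorem \ref{f-generic-type-has-bounded-orbit}(iii) every $f$-generic on $H$ is then $H(\M)$-invariant. I pick a definable $f$-generic $p_H\in S_H(\M)$ (which exists since $H$ has \emph{dfg}); it is automatically $H(\M)$-invariant. I also pick a definable $f$-generic $p_T\in S_T(\M)$. In a saturated extension $\bar\M\succ\M$, take $b\models p_T|\bar\M$, any lift $g\in G(\bar\M)$ with $\pi(g)=b$, and $h$ realizing the canonical definable extension of $p_H$ over $\M g$, and set $p_G:=\tp(gh/\M)$. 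The key auxiliary observation is that the canonical definable extension of $p_H$ over any base $\M'\supseteq \M$ remains $H(\M')$-invariant, since the $H(\M)$-invariance of $p_H$ is witnessed uniformly by its definition scheme and so transfers. This yields well-definedness of $p_G$ independent of the lift: for $g_1=g_2 h_0$, left-translating by $h_0\in H(\M g_2 h_0)$ inside the canonical extension gives $\tp(h_0 h/\M g_2)=p_H|\M g_2$, forcing $\tp(g_1 h/\M)=\tp(g_2 h'/\M)$ with $h'$ canonical over $\M g_2$. Definability of $p_G$ then follows by composing the definition schemes of $p_H$ and $p_T$.

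Next I compute $\mathrm{Stab}_{G(\M)}(p_G)$. By construction $\pi_*(p_G)=p_T$, so any $g^*$ stabilizing $p_G$ satisfies $\pi(g^*)\in\mathrm{Stab}(p_T)$. Conversely, for $g^*\in\pi^{-1}(\mathrm{Stab}(p_T))$, the element $g^*g$ is a lift of $\pi(g^*)b$, which still realizes $p_T|\bar\M$ (by uniqueness of definable extensions); applying well-definedness to $g^*g$ with the same $h$ (valid since $g^*\in\M$, so $\M(g^*g)=\M g$) gives $g^* p_G=\tp(g^*gh/\M)=p_G$. Hence $\mathrm{Stab}_{G(\M)}(p_G)=\pi^{-1}(\mathrm{Stab}_{T(\M)}(p_T))=\pi^{-1}(T^{00})$, invoking Theorem \ref{f-generic-type-has-bounded-orbit}(iv) on $T$ for the last equality. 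The group $\pi^{-1}(T^{00})$ is type-definable of bounded index in $G$; and since $p_G$ has bounded $G(\M)$-orbit (in bijection with the bounded $T(\M)$-orbit of $p_T$), Theorem \ref{f-generic-type-has-bounded-orbit}(ii) makes $p_G$ $f$-generic, and then Theorem \ref{f-generic-type-has-bounded-orbit}(iv) applied to $G$ forces $G^{00}=\pi^{-1}(T^{00})$. This yields $H\leq G^{00}$ and $\pi(G^{00})=T^{00}$, and $G$ has \emph{dfg} via Fact \ref{amenable} together with the existence of the $f$-generic $p_G$.

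The main obstacle I anticipate is the well-definedness of $p_G$ together with the stabilizer computation, both of which rely on the uniform transport of $H$-invariance across base sets via the definition scheme of $p_H$; once this technical core is established, the structural conclusions follow formally.
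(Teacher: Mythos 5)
Your proposal is correct and follows essentially the same route as the paper: both construct the candidate $f$-generic on $G$ as $\tp(gh/\M)$ with $\pi(g)$ realizing the heir of the $f$-generic on $T$ and $h$ realizing the heir of the ($H$-invariant) $f$-generic on $H$, use the $H$-invariance transported via the definition scheme to get well-definedness/definability (the paper's $D_\phi = HD_\phi$ step), and then deduce $\pi^{-1}(T^{00})$-invariance, bounded orbit, and $G^{00}=\pi^{-1}(T^{00})$ from Theorem \ref{f-generic-type-has-bounded-orbit}(ii),(iv). The only (harmless) differences are presentational: the paper defines the type formula-by-formula via a pushforward condition and then identifies it with $\tp(gh/\M)$, whereas you go in the opposite direction, and you additionally compute the full stabilizer, of which only the inclusion $\pi^{-1}(T^{00})\subseteq\mathrm{Stab}(p_G)$ is actually needed.
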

\begin{proof}
Let $p\in S_H(\M)$ be an f-generic type of $H$,   and $q\in S_{T}(\M)$ a f-generic type of $T$, both of them definable over a small submodel $N$. We now define a type $r\in S_G(\M)$ as follows: for any $ L_\M$-formula $\phi(x)$,
\[
\phi(x)\in r\Longleftrightarrow \{\pi(g)|g\in G,\  g^{-1}\phi(x)\in p\}\in q
\]
Since $p$ is definable,
\[
D_\phi=\{g|g\in G,\  g^{-1}\phi(x,b)\in p\}
 \]
is a definable subset of $G$, hence $\pi(D_\phi)$ is a definable subset of $T$, and we identify it with a formula defining it.
Since $p$ is $H$-invariant, we see that $D_\phi=HD_\phi$ and thus $T\backslash \pi(D_\phi)=\pi(D_{\neg\phi})$.
 So $r$ is well-defined and  complete. Since $q$ is also definable over $N$, $r$ is definable over $N$.

 Let $\bar \M$ be an $|\M|^+$-saturated elementary extension of $\M$, and $\tilde{\M}$ an $|\bar \M|^+$-saturated elementary extension of $\bar \M$ . For any $\bar g\in G(\bar \M)$ such that $\pi(\bar g)$ realizes $q$ and any $\tilde h\in H(\tilde\M)$ such that $\tilde h$ realizes $ p|\bar \M$ (the unique heir of $p$ over $\bar \M$), we claim that:
 \begin{Claim}
 $r=\tp(\bar g\tilde h/\M)$.
 \end{Claim}
 \begin{proof}
 Let $\phi(x)\in r$, and $\psi(x,y)$ be the formula $y^{-1}\phi(x)$. As $p$ is definable, there is a formula $\theta(y)$ such that $\psi(x,y)\in p$ iff $\theta(y)$ holds. Since $\pi(\theta(y))\in q$, we see that $\theta(\bar g)$ holds in $\bar \M$ ans thus $\psi(\bar g,x)\in p|\bar \M$. So $\psi(\bar g,\tilde h)$ holds in $\tilde \M$, which means that $\phi(\bar g\tilde h)$ holds as required.
 \end{proof}

 \begin{Claim}
 $r$ is invariant under $\pi^{-1}({T}^{00})$.
 \end{Claim}
  \begin{proof}
Let $r=\tp(\bar g\tilde h/\M)$, with $\bar g$ and $\tilde h$ as above Claim. By the above Claim, we only need to check that for any $g\in \pi^{-1}({T}^{00})$, $\pi(g\bar g)$ realizes $q$. Since
$\pi(g\bar g)=\pi(g)\pi(\bar g)$ and $q$ is ${T}^{00}$, $q=\tp(\pi(g)\pi(\bar g)/\M)=\tp(\pi(g \bar g)/\M)$ as required.
\end{proof}
Since ${T}^{00}$ has bounded index in $T$, we see that $\pi^{-1}({T}^{00})$ has bounded index in $G$. So $r$ has a bounded orbit and hence is an $f$-generic type of $G$ by Theorem \ref{f-generic-type-has-bounded-orbit} (ii). This  also implies that $G^{00}=\pi^{-1}({T}^{00})$ by Theorem \ref{f-generic-type-has-bounded-orbit} (iv). So we conclude that $H\leq G^{00}$ and $\pi(G^{00})={T}^{00}$.

\end{proof}

Recall that a connected algebraic group $G$ is trigonalizable over $\Q$ if it  admits a short exact sequence
\[
1\longrightarrow U  \xlongrightarrow {i}  G  \xlongrightarrow {\pi}  T \longrightarrow 1,
\]
where $U\subseteq \M^{m}$ is the maximal unipotent subgroup of $G$ and $T\subseteq \M^{n}$ is a split torus, namely, isomorphic to  $ \Gm^{n}$. $U$ is connected as char $\Q=0$, and a connected  unipotent group admits a normal sequence
\[
1=U_0\leq...\leq U_i\leq U_{i+1}\leq U_k=U
\]
such that each $U_{i+1}/U_i$ has dimension one for each $i\leq k$. Since any connected one-dimensional unipotent algebraic group is isomorphic to $\Ga$, we see that $U_{i+1}/U_i\cong \Ga=(\K,+)$ for each $i\leq k$, and thus $U$ splits over $\Q$. By \cite{iAG200}, Theorem 17.26, the short exact sequence
\[
1\longrightarrow U  \xlongrightarrow {i}  G  \xlongrightarrow {\pi}  T \longrightarrow 1,
\]
splits. So $G$ is definably isomorphic to the semiproduct $T\ltimes U$

Induction on $\dim(U)$ we could easily conclude that

\begin{Cor}\label{unipotent algebraic group is definably-connected}
If $U$ is an unipotent algebraic group, then $U^{00}=U$.
\end{Cor}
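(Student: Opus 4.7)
The plan is to prove the corollary by induction on $\dim(U)$, using the normal series $1 = U_0 \leq U_1 \leq \cdots \leq U_k = U$ (constructed in the paragraph immediately preceding the statement, with each $U_{i+1}/U_i \cong \Ga$) together with Lemma \ref{intergal-f-generic-type} as the driving tool. The inductive hypothesis will carry two pieces simultaneously: $U^{00} = U$ and $U$ has \emph{dfg}. Packaging both is essential, since Lemma \ref{intergal-f-generic-type} requires the normal subgroup in the short exact sequence to have \emph{dfg} and to admit an invariant type.

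For the base case $\dim(U) = 1$, a connected one-dimensional unipotent algebraic group over $\Q$ is isomorphic to $\Ga$ (as noted in the discussion above the statement). Then $U^{00} = \Ga^{00} = \Ga$ by Fact \ref{Gm0 and Ga0}, and $\Ga$ has \emph{dfg} by Fact \ref{Ga and Gm has dfg}. For the inductive step with $\dim(U) = k \geq 2$, consider the short exact sequence
\[
1 \longrightarrow U_{k-1} \xlongrightarrow{i} U \xlongrightarrow{\pi} U/U_{k-1} \longrightarrow 1,
\]
where $U_{k-1}$ is a connected unipotent algebraic group of dimension $k-1$ and $U/U_{k-1} \cong \Ga$. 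By the inductive hypothesis $U_{k-1}^{00} = U_{k-1}$ and $U_{k-1}$ has \emph{dfg}; in particular $U_{k-1}$ is definably amenable, so by Theorem \ref{f-generic-type-has-bounded-orbit} (combined with the remark just before Lemma \ref{intergal-f-generic-type} that a definably amenable $H$ with $H^{00} = H$ admits an $H$-invariant type) $U_{k-1}$ has a $U_{k-1}$-invariant type. Meanwhile $U/U_{k-1} \cong \Ga$ has \emph{dfg} by Fact \ref{Ga and Gm has dfg}.

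We may therefore apply Lemma \ref{intergal-f-generic-type} to the above sequence. The conclusion gives that $U$ has \emph{dfg}, that $U_{k-1} \leq U^{00}$, and that $\pi(U^{00}) = (U/U_{k-1})^{00} = U/U_{k-1}$ (using the base case for the last equality). Since $\ker(\pi) = U_{k-1} \leq U^{00}$ and $\pi(U^{00}) = U/U_{k-1}$, a routine diagram chase yields $U^{00} = U$, closing the induction. There is no real obstacle here: the only subtle point is bookkeeping, namely ensuring that the inductive statement bundles the \emph{dfg} property with $U^{00} = U$ so that the hypotheses of Lemma \ref{intergal-f-generic-type} are genuinely available at each step rather than assumed a posteriori.
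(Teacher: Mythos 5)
Your proof is correct and is exactly the induction on $\dim(U)$ that the paper invokes (the paper gives no details beyond ``induction on $\dim(U)$''): peel off the top term of the normal series, apply Lemma \ref{intergal-f-generic-type} to $1\to U_{k-1}\to U\to U/U_{k-1}\cong\Ga\to 1$, and combine $U_{k-1}\leq U^{00}$ with $\pi(U^{00})=(U/U_{k-1})^{00}=U/U_{k-1}$. Your point about bundling \emph{dfg} (and hence the existence of an invariant type, via the equivalence stated just before Lemma \ref{intergal-f-generic-type}) into the inductive hypothesis is exactly the bookkeeping needed to make the lemma applicable at each step.
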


\begin{Lemma}\label{intergal-f-generic-type-of-direct-product}
Let $G$ and $H$ be definable groups such that both $G$ and $H$ have \emph{dfg}. Then the direct product $G\times H$ has \emph{dfg}. Moreover $(G\times H)^{00}=G^{00}\times H^{00}$.
\end{Lemma}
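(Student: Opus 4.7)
The plan is to build a global definable $f$-generic type of $G\times H$ via a Morley product, mimicking the strategy of Lemma \ref{intergal-f-generic-type} but without needing any ``invariant type'' hypothesis: in a direct product the two factors act independently, so the construction simplifies considerably.

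First, fix global definable $f$-generic types $p\in S_G(\M)$ and $q\in S_H(\M)$, definable over a common small submodel $N\prec \M$. For an $L_\M$-formula $\phi(x,y)$, with $x$ ranging over $G$ and $y$ over $H$, the definability of $p$ produces an $L_\M$-formula $D_\phi(y)$ such that $D_\phi(b)$ holds iff $\phi(x,b)\in p$. Define $r\in S_{G\times H}(\M)$ by declaring
\[
\phi(x,y)\in r \iff D_\phi(y)\in q.
\]
The standard Morley product verification shows $r$ is a complete type, definable over $N$, and realized by any pair $(a,b)$ with $b\models q$ and $a\models p|\M b$ in a sufficiently saturated extension.

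The central computation is that $r$ is invariant under $G^{00}\times H^{00}$. For $(g_0,h_0)\in G^{00}\times H^{00}$ and any $\phi(x,y)$, unpacking gives
\[
\phi(x,y)\in (g_0,h_0)\cdot r \iff \phi(g_0 x,h_0 y)\in r \iff \{b: \phi(g_0 x,h_0 b)\in p\}\in q.
\]
By $G^{00}$-invariance of $p$ (Theorem \ref{f-generic-type-has-bounded-orbit}(iii)), $\phi(g_0 x,h_0 b)\in p$ iff $\phi(x,h_0 b)\in p$, so the displayed set equals $\{b:D_\phi(h_0 b)\text{ holds}\}$; this set lies in $q$ iff $D_\phi\in h_0\cdot q$, which by $H^{00}$-invariance of $q$ is in turn equivalent to $D_\phi\in q$, i.e., $\phi(x,y)\in r$.

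Since $G^{00}\times H^{00}$ is type-definable of bounded index in $G\times H$, $r$ has bounded $(G\times H)$-orbit, hence is $f$-generic by Theorem \ref{f-generic-type-has-bounded-orbit}(ii). As $r$ is definable, $G\times H$ has \emph{dfg}. Moreover, $r$ is an $f$-generic type fixed by the type-definable bounded-index subgroup $G^{00}\times H^{00}$, so Theorem \ref{f-generic-type-has-bounded-orbit}(iv), used just as in the concluding line of Lemma \ref{intergal-f-generic-type}, forces $(G\times H)^{00}=G^{00}\times H^{00}$. The only nontrivial step in all of this is the invariance computation of the third paragraph; it is essentially bookkeeping with left translates, but is where the direct-product structure genuinely enters the argument.
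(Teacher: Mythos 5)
Your proof is correct and follows essentially the same route as the paper's: both form the Morley product of global definable $f$-generic types of the two factors, observe that it is definable and $G^{00}\times H^{00}$-invariant, and conclude via Theorem \ref{f-generic-type-has-bounded-orbit} (ii) and (iv). The only (immaterial) differences are the order in which the product is taken and that you verify the invariance syntactically through the defining schema rather than through realizations.
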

\begin{proof}
Let $p=\tp(g/\M)\in S_G(\M)$ be an f-generic type of $H$ definable over $M$ and $q=\tp(h/\bar \M)\in S_{H}(\bar \M)$ a f-generic type of $H$ definable over $M$, where $\bar \M$ is an $|\M|^+$-saturated elementary extension of $\M$. Let $r=\tp((g,h)/\M)\in S_{G\times H}(\M)$. An analogues argument as Lemma \ref{intergal-f-generic-type} showing that $r$ is definable, and  for any $g'$ realizes $p$ and any $h'$ realizes $q|\bar \M$, we have $r=\tp((g',h')/\M)$. Since $p$ is $G^{00}$-invariant and $q$ is $H^{00}$-invariant, we see that $r$ is $G^{00}\times H^{00}$-invariant. Since $G^{00}\times H^{00}$ has bounded index in $G\times H$, we see that $r$ has bounded index and hence an $f$-generic type by Theorem \ref{f-generic-type-has-bounded-orbit} (ii), and $G^{00}\times H^{00}=(G\times H)^{00}$ by Theorem \ref{f-generic-type-has-bounded-orbit} (iv).
\end{proof}

By Fact \ref{Gm0 and Ga0} and  Lemma \ref{intergal-f-generic-type} we conclude directly that

\begin{Cor}\label{Gm-n-is-dfg-Gmn00}
$\Gm^n$ has \emph{dfg}, and ${\Gm^n}^{00}={\Gm^{00}}^n={\Gm^{0}}^n$.
\end{Cor}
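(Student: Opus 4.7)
The plan is to induct on $n$, invoking Lemma \ref{intergal-f-generic-type-of-direct-product} at each step with the decomposition $\Gm^n = \Gm^{n-1} \times \Gm$. The base case $n=1$ is immediate: Fact \ref{Ga and Gm has dfg} gives that $\Gm$ has \emph{dfg}, and Fact \ref{Gm0 and Ga0} records the identity $\Gm^{00} = \Gm^{0} = \bigcap_{k \in \N^+} P_k(\K^*)$.

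For the inductive step, suppose $\Gm^{n-1}$ has \emph{dfg} with $(\Gm^{n-1})^{00} = (\Gm^{00})^{n-1}$. Since $\Gm$ also has \emph{dfg}, Lemma \ref{intergal-f-generic-type-of-direct-product} applied to $G = \Gm^{n-1}$ and $H = \Gm$ yields that $\Gm^n = \Gm^{n-1} \times \Gm$ has \emph{dfg} and
\[
(\Gm^n)^{00} \;=\; (\Gm^{n-1})^{00} \times \Gm^{00} \;=\; (\Gm^{00})^{n-1} \times \Gm^{00} \;=\; (\Gm^{00})^n \;=\; (\Gm^0)^n,
\]
where the last equality uses the base case identity $\Gm^{00} = \Gm^0$.

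It remains only to identify $(\Gm^n)^0$ with $(\Gm^0)^n$. The inclusion $(\Gm^0)^n = (\Gm^n)^{00} \leq (\Gm^n)^0$ is automatic. For the reverse direction, for each coordinate index $i \leq n$ and each $k \in \N^+$, the set
\[
H_{i,k} \;=\; \{(x_1,\ldots,x_n) \in \Gm^n : x_i \in P_k(\K^*)\}
\]
is a definable subgroup of $\Gm^n$, and it is of finite index since $\Gm/P_k$ is finite by Macintyre's theorem. Hence $(\Gm^n)^0 \leq H_{i,k}$ for all $i,k$, and intersecting over all such $(i,k)$ gives $(\Gm^n)^0 \leq (\Gm^0)^n$, closing the chain of equalities $(\Gm^n)^{00} = (\Gm^{00})^n = (\Gm^0)^n = (\Gm^n)^0$.

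The argument is essentially mechanical once the direct product lemma is in hand; there is no real obstacle, and the only nontrivial verification is the last inclusion $(\Gm^n)^0 \leq (\Gm^0)^n$, which rests on Macintyre's finiteness of $\Gm/P_k$ and the explicit witnesses $H_{i,k}$.
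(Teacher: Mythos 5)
Your proof is correct and is essentially the paper's intended derivation: the corollary is stated as following ``directly'' from Fact \ref{Gm0 and Ga0} together with the product construction, and your induction on $n$ via Lemma \ref{intergal-f-generic-type-of-direct-product} is the right way to fill that in (indeed, the paper's literal citation of Lemma \ref{intergal-f-generic-type} cannot be applied as stated here, since that lemma requires the kernel to admit an invariant type, which fails for $\Gm$ because $\Gm^{00}\neq\Gm$; your choice of the direct-product lemma avoids this). Your final paragraph identifying $({\Gm^n})^{0}$ with $({\Gm^{0}})^n$ via the subgroups $H_{i,k}$ is correct but superfluous: the statement asserts only $({\Gm^n})^{00}=({\Gm^{00}})^n=({\Gm^{0}})^n$, and the last equality is immediate from $\Gm^{00}=\Gm^{0}$.
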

Moreover, we have
\begin{Fact}\label{f-g is defble}\cite{Yao-Presburger}
Let $N\succ M$. If $p\in S_{\Gm^n}(N)$ is $f$-generic, then $p$ is definable over $\emptyset$.
\end{Fact}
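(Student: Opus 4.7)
The plan is to proceed by induction on $n$. The base case $n=1$ is exactly Fact~\ref{Ga and Gm has dfg}(i) applied to $\Gm$. For $n>1$, I split $\Gm^n$ via the short exact sequence
\[
1 \to \Gm \to \Gm^n \to \Gm^{n-1} \to 1
\]
with $\pi$ the projection onto the last $n-1$ coordinates. Take a global $f$-generic extension $\bar p \in S_{\Gm^n}(\M)$ of $p$ (which exists since $\Gm^n$ is definably amenable and $\mathbb T$ is NIP) and realize it as $(g,h)$ with $g \in \Gm$ and $h \in \Gm^{n-1}$ in a sufficiently saturated elementary extension of $\M$.

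For the quotient, I first argue that $\pi_*\bar p = \tp(h/\M)$ is $f$-generic in $\Gm^{n-1}$: by Theorem~\ref{f-generic-type-has-bounded-orbit}(iii), $\bar p$ is $(\Gm^n)^{00}$-invariant, and by Corollary~\ref{Gm-n-is-dfg-Gmn00}, $(\Gm^n)^{00} = (\Gm^0)^n$ surjects onto $(\Gm^0)^{n-1} = (\Gm^{n-1})^{00}$ under $\pi$, which yields $(\Gm^{n-1})^{00}$-invariance of $\pi_*\bar p$. Restricting to $N$ preserves $f$-genericity, so $\tp(h/N)$ is $f$-generic, and by the inductive hypothesis it is $\emptyset$-definable.

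For the fiber, since $\bar p$ is invariant under $\Gm^0 \times \{1\}^{n-1} \subseteq (\Gm^n)^{00}$, we get $\Gm^0$-invariance of $\tp(g/\M, h)$ in the first coordinate. Choosing a model $N' \succ M$ containing $N \cup \{h\}$, the type $\tp(g/N')$ inherits $\Gm^0$-invariance, hence is $f$-generic by Theorem~\ref{f-generic-type-has-bounded-orbit}(iii); Fact~\ref{Ga and Gm has dfg}(i) then gives that $\tp(g/N')$ is $\emptyset$-definable, and so too is its restriction $\tp(g/N, h)$. Composing the two defining schemes yields $\emptyset$-definability of $p$: for any formula $\phi(x_1, y, z)$, $\phi(x, b) \in p$ iff $\phi(x_1, h, b) \in \tp(g/N, h)$ iff $d_\phi(h, b)$ for some $\emptyset$-formula $d_\phi$ (by the fiber step) iff $e_\phi(b)$ for some $\emptyset$-formula $e_\phi$ (by the quotient step).

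The main obstacle I foresee is the uniformity in the fiber step: the defining scheme for $\tp(g/N, h)$ must be genuinely parameter-free in $h$ in order for the composition to produce an $\emptyset$-formula in $b$. This uniformity is precisely the content of Fact~\ref{Ga and Gm has dfg}(i), which provides $\emptyset$-definability of any $f$-generic type of $\Gm$ over any $N' \succ M$ by a defining scheme not depending on the choice of $N'$; this in turn rests on the explicit classification in Fact~\ref{complete 1 types over Qp} of $f$-generic types of $\Gm$ as the types $p_{\infty, C}$ and $p_{0, C}$, each described by $\emptyset$-definable data (the $P_n$-coset structure plus the sign of the valuation).
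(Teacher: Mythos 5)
First, note that the paper does not actually prove this statement: it is imported from the reference \cite{Yao-Presburger}, so there is no internal proof to compare yours against. Your base case is fine, and so is your quotient step: the pushforward of a global $f$-generic type under the coordinate projection is $(\Gm^{n-1})^{00}$-invariant, hence $f$-generic by Theorem \ref{f-generic-type-has-bounded-orbit}(iii), and restricting a global $f$-generic type to $N$ preserves $f$-genericity since it only removes formulas from the type.

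The fiber step, however, contains a genuine error, and the intermediate claim it rests on is false. You assert that $\tp(g/N')$ is $f$-generic over a model $N'\supseteq N\cup\{h\}$ because $\tp(g/\M,h)$ is invariant under $\Gm^{0}\times\{1\}^{n-1}$. But Theorem \ref{f-generic-type-has-bounded-orbit}(iii) characterizes $f$-genericity of a type via invariance under $G^{00}$ of a monster model \emph{over the base of that type}; enlarging the base from $N$ to $N'$ enlarges the relevant copy of $\Gm^{00}$, which now contains elements $a\in\bigcap_m P_m$ whose valuation is infinite relative to $\Gamma_{N'}$, and translation by such $a$ need not fix $\tp(g/N')$. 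Concretely, take $n=2$, $N=M$, and realize $g,h$ with $\nu(g)=\alpha$ and $\nu(h)=\beta$ where $\mathbb Z<\alpha$ and $k\alpha<\beta$ for all $k\in\mathbb N$. By Proposition \ref{f-generics iff} (with $U$ trivial), $\tp(g,h/M)$ is $f$-generic for $\Gm^2$, since $m_1\alpha+m_2\beta$ is unbounded over $\mathbb Z$ for every $(m_1,m_2)\neq(0,0)$. Yet for $N'=\dcl(M\cup\{h\})$ one has $1\leq\nu(g)\leq\nu(h)$ with both bounds in $\Gamma_{N'}$, so $\tp(g/N')$ is not $f$-generic for $\Gm$ over $N'$; worse, the cut that $\alpha$ determines in $\Gamma_{N'}$ is not Presburger-definable, so $\tp(g/N')$ is not definable at all, and the composition of defining schemes you propose has nothing to compose. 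Permuting the coordinates does not help: if $\alpha$ and $\beta$ are both positive infinite with $\beta<\alpha<2\beta$ and all nonzero integer combinations infinite over $\mathbb Z$, then neither coordinate is $f$-generic over the model generated by the other. In these examples the joint type is nevertheless $\emptyset$-definable, but for a different reason: for a polynomial $q(x,b)$ with $b$ ranging over $N$, the monomial of least valuation is determined by the cut data of $(\nu(g),\nu(h))$ alone, independently of $b$, so the $P_m$-class of $q(g,h,b)$ is an $\emptyset$-definable condition on $b$. A correct inductive proof would have to be organized around such a uniform dominant-monomial analysis (in the spirit of the proof of Lemma \ref{tp(u,t) f-generic-III}) rather than around $f$-genericity of the fiber type.
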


We conclude directly from Lemma \ref{intergal-f-generic-type}, Corollary \ref{unipotent algebraic group is definably-connected}, and Corollary \ref{Gm-n-is-dfg-Gmn00}, we conclude directly that

\begin{Cor}\label{G-is-dfg-G00}
If $G$ is trigonalizable over $\Q$, then $G$ has \emph{dfg}. Moreover, if $G\cong T\ltimes U$ with $U$ the maximal unipotent subgroup of $G$ and  $T\cong \Gm^n$  the tours, we have
\[G^{00}\cong T^{00}\ltimes U=T^{0}\ltimes U.\]
\end{Cor}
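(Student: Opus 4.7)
The plan is straightforward: once Lemma \ref{intergal-f-generic-type} is in hand, the corollary drops out by a single application of it to the defining split exact sequence of the trigonalizable group, with Corollary \ref{unipotent algebraic group is definably-connected} and Corollary \ref{Gm-n-is-dfg-Gmn00} supplying the hypotheses.

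First I would invoke the structural description that precedes the statement: since $G$ is trigonalizable over $\Q$ and $\mathrm{char}\,\Q=0$, one has a split short exact sequence
\[
1 \longrightarrow U \longrightarrow G \stackrel{\pi}{\longrightarrow} T \longrightarrow 1,
\]
with $U$ the maximal unipotent subgroup and $T \cong \Gm^n$, so that $G \cong T \ltimes U$. This is exactly the set-up of Lemma \ref{intergal-f-generic-type}.

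Next I would verify the hypotheses of that lemma. By Corollary \ref{unipotent algebraic group is definably-connected} (together with its inductive proof through a composition series by $\Ga$-quotients, which in fact shows dfg at every stage), $U$ has \emph{dfg} and $U^{00}=U$; in particular $U$ admits a $U$-invariant global type. By Corollary \ref{Gm-n-is-dfg-Gmn00}, $T \cong \Gm^n$ has \emph{dfg} and $T^{00}=T^0$. Lemma \ref{intergal-f-generic-type} then delivers the first assertion, that $G$ has \emph{dfg}, together with the two connectedness conclusions $U \leq G^{00}$ and $\pi(G^{00}) = T^{00}$.

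Finally I would extract the semidirect-product description of $G^{00}$. Since $G^{00}$ contains the kernel $U$ of $\pi$ and surjects onto $T^{00}$, we get $G^{00} = \pi^{-1}(T^{00})$. Under the chosen splitting $G \cong T \ltimes U$ this preimage is precisely $T^{00} \ltimes U$, and combining with $T^{00}=T^0$ from Corollary \ref{Gm-n-is-dfg-Gmn00} yields the asserted identity $G^{00} \cong T^{00}\ltimes U = T^{0}\ltimes U$. There is essentially no obstacle here: the substantive work sits entirely in Lemma \ref{intergal-f-generic-type} and in the inductive proof that unipotent algebraic groups in characteristic zero are dfg and definably-connected; the only small check is that $\pi^{-1}(T^{00})$ really coincides with $T^{00}\ltimes U$ inside $T\ltimes U$, which is immediate from the splitting.
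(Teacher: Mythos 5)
Your proposal is correct and matches the paper's intended argument exactly: the paper states the corollary as following "directly" from Lemma \ref{intergal-f-generic-type}, Corollary \ref{unipotent algebraic group is definably-connected}, and Corollary \ref{Gm-n-is-dfg-Gmn00}, and you have simply filled in the routine verifications (including the useful observation that the induction behind the unipotent case also yields \emph{dfg} and hence a $U$-invariant type, which is needed as a hypothesis of the lemma). Nothing further is required.
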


\subsection{f-generic types and almost periodic}

We now suppose that  $G$ is trigonalizable algebraic group  over $\Q$, which is isomorphic to the semiproduct $\Gm^{n}\ltimes U$, with $U\subseteq \M^{m}$ an unipotent algenraic group over $\Q$. We now denote $\Gm^n$ by $T$ and  identify $G$ with $T\ltimes U $, and every element $g\in G$ with the unique pair $(t,u)$, where $t\in T$, $u\in U$, and $g=tu$. It is easy to see that the group operation is given by
\[
(t_1,u_1)(t_2,u_2)=t_1t_2 t_2^{-1} u_1 t_2u_2=(t_1t_2,t_2^{-1} u_1 t_2u_2)=(t_1t_2, u_1^{t_2}u_2).
\]
The groups $G$, $T$, and $U$ are defined by the formulas $G(x,y)$, $T(x)$, and $U(y)$ respectively.




For any $t=(t_1,...,t_n)\in T$, and $m=(m_1,...,m_n)\in {\mathbb Z}^n$, by $t^m$  we mean $\Pi_{i=1}^{n}t_i^{m_i}$. For any $m=(m_1,...,m_n)$ and $m'=(m'_1,...,m'_n)$ in ${\mathbb Z}^n$, by $m-m'$ we mean $(m_1-m'_1,...,m_n-m'_n)$. Let $\Gamma_1\succ\Gamma_0\succ(\mathbb Z,+,<)$, we say that $\gamma\in \Gamma_1$ is bounded over $\Gamma_0$ if there are $c,d\in \Gamma_0$ such that $c\leq \gamma\leq d$ and unbounded if otherwise.

\begin{Lemma}\label{tp(u,t) f-generic-II}
Let $N=(K,+,\times,0,1)$ be any elementary extension of $M$, and $\Gamma_K$ is the valuation group of $K$. If $t\in T$ and $u\in U$ such that $\tp(t,u/N)$ is an $f$-generic type of $G$ over $N$, then
\begin{enumerate}
    \item [(i)] $\tp(t,u/N)=\tp(t,b^tu/N)$ for For any $b\in U(N)$;
    \item [(ii)] For any $m\in {\mathbb Z}^n$, $b\in U(N)$,  and definable function $f$ definable over $N$, $\nu(t^m)+\nu(f(b^tu)) $ is unbounded over $\Gamma_K$ whenever $m\neq (0,...,0)$.
\end{enumerate}
\end{Lemma}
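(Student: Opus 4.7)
My plan is to lift $\tp(t,u/N)$ to a global $f$-generic $\bar p = \tp(\bar t,\bar u/\M)$, invoke its $G^{00}$-invariance (Theorem \ref{f-generic-type-has-bounded-orbit}(iii)), and translate the action of well-chosen elements of $G^{00}$ back down to $\tp(t,u/N)$. The key structural input will be Corollary \ref{G-is-dfg-G00}: $G^{00} = T^{00}\ltimes U$ with $T^{00}=(\Gm^0)^n$, so both $U$ (embedded as $\{(1_T,b)\}$) and $T^{00}$ (embedded as $\{(s,1_U)\}$) sit inside the stabilizer of $\bar p$.

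For (i), I would take any $b\in U(N)\subseteq U(\M)$, compute in the semidirect product $(1_T,b)(\bar t,\bar u)=(\bar t,b^{\bar t}\bar u)$, and apply $G^{00}$-invariance to get $\tp(\bar t,b^{\bar t}\bar u/\M)=\tp(\bar t,\bar u/\M)$. Restricting to $N$ and pushing forward under the $N$-definable map $(x,y)\mapsto(x,b^xy)$ then yields $\tp(t,b^tu/N)=\tp(t,u/N)$.

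For (ii), I would first use (i) to reduce to $b=1_U$: part (i) gives $\tp(t,f(b^tu)/N)=\tp(t,f(u)/N)$, so it suffices to show $\nu(t^m)+\nu(f(u))$ is unbounded over $\Gamma_K$. Suppose for contradiction that there exist $c,d\in\Gamma_K$ with $c\le \nu(t^m)+\nu(f(u))\le d$; this is an $N$-formula $\phi(t,u)$. Now I would exploit the $T^{00}$-part: for any $s\in T^{00}(\M)$, $(s,1_U)\in G^{00}$ and $(s,1_U)(\bar t,\bar u)=(s\bar t,\bar u)$, so $(s\bar t,\bar u)\equiv_N(\bar t,\bar u)\equiv_N (t,u)$, hence $\phi(s\bar t,\bar u)$ holds. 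Expanding, $\nu(s^m)+\nu(\bar t^m)+\nu(f(\bar u))\in[c,d]$; writing $\gamma = \nu(\bar t^m)+\nu(f(\bar u))$, this forces $\nu(s^m)\in[c-\gamma,d-\gamma]$ for \emph{every} $s\in T^{00}(\M)$.

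The contradiction will then come from showing $\{\nu(s^m): s\in T^{00}(\M)\}$ is unbounded in $\Gamma$ whenever $m\ne 0$. Assuming $m_1\ne 0$ and specializing to $s=(s_1,1,\ldots,1)$ with $s_1\in\Gm^{00}(\M)$ reduces this to the unboundedness of $m_1\cdot \nu(\Gm^{00}(\M))$. By Fact \ref{Gm0 and Ga0}, $\Gm^{00}(\M)=\bigcap_n P_n(\K^*)$; a routine saturation argument identifies $\nu(\Gm^{00}(\M))$ with $\bigcap_n n\Gamma$, the maximal divisible subgroup of $\Gamma$, and a Presburger-style compactness argument shows this subgroup is unbounded in sufficiently saturated $\Gamma$. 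Multiplication by $m_1\ne 0$ is bijective on this divisible group, delivering the needed unboundedness. I expect this last saturation step to be the main technical point: one must confirm $\M$ is saturated enough over the parameters in $N$ and the fixed element $\gamma$ for arbitrarily large divisible valuations to exist, which is standard but needs to be invoked carefully.
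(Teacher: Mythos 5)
Your proposal is correct and follows essentially the same route as the paper: part (i) via $U\leq G^{00}=T^{00}\ltimes U$ and invariance of a global $f$-generic extension, and part (ii) by translating $t$ by elements $a\in T^{00}$ with $\nu(a^m)$ exceeding $d-c$ to contradict boundedness. Your reduction to $b=1_U$ is an unnecessary but harmless detour, and your explicit saturation argument for the unboundedness of $\{\nu(s^m):s\in T^{00}\}$ just fills in a step the paper asserts without proof.
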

\begin{proof}
\begin{enumerate}
    \item [(i)] Since any $f$-generic extension $\bar p\in S_G(\M)$ of $\tp(t,u/N)$ is $U$-invariant, we see that $\tp(t,u/N)$ is $U(N)$-invariant, and hence
    \[\tp(t,b^tu/N)=(1,b)\cdot\tp(t,u/N)=\tp(t,u/N);
    \]
    \item [(ii)] Suppose for a contradiction that there are $c,d\in \Gamma_K$, $m\leq 0\in {\mathbb Z}^n$, and function $f$ definable over $N$ such that
    \[c\leq \nu(t^m)+\nu(f(b^tu))\leq d.\]
     Then for any $a\in T^{00}$, we see that
     \[\tp(t,b^tu/N)=\tp(t,u/N)=\tp(at,b^tu/N).\]
     So we have
     \[
     c\leq \nu({t}^m)+\nu(f(b^{t}u))\leq d,\ \text{and}\ \ c\leq \nu({at}^m)+\nu(f(b^{t}u))\leq d.
     \]
    If $m\neq 0$, then we can take  some $a\in T^{00}$ such that $\nu(a^m)=\sum_{i=1}^{n}m_i\nu(a_i)>d-c$. But
    \[
    c\leq \nu({at}^m)+\nu(f(b^{t}u))=\nu(a^m)+\nu({t}^m)+\nu(f(b^{t}u))\leq d
    \]
     implies that $\nu(a^m)\leq d-c$. A contradiction.
\end{enumerate}
\end{proof}

\begin{Lemma}\label{tp(u,t) f-generic-III}
Let $N=(K,+,\times,0,1)$ be any elementary extension of $M$, and $\Gamma_K$ is the valuation group of $K$. If $t\in T$ and $u\in U$ such that
\begin{enumerate}
    \item [(i)] $p(x,y)=\tp(t,u/N)=\tp(t,b^tu/N)$ for For any $b\in U(N)$
    \item [(ii)] For any $m\in {\mathbb Z}^n$, $b\in U(N)$,  and definable function $f$ definable over $N$, $\nu(t^m)+\nu(f(b^tu)) $ is unbounded over $\Gamma_K$ whenever $m\neq (0,...,0)$.
\end{enumerate}
Then $\tp(t,u/N)$ is an $f$-generic type of $G$ over $N$.
\end{Lemma}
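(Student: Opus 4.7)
The plan is to exhibit a global extension $\bar p \in S_G(\M)$ of $p$ which is $G^{00}(\M)$-invariant; by Corollary \ref{G-is-dfg-G00} and Theorem \ref{f-generic-type-has-bounded-orbit}(iii), such a $\bar p$ is $f$-generic, so every formula in $p \subseteq \bar p$ is $f$-generic and hence $p$ itself is $f$-generic. Since $G^{00}(\M) = T^{00}(\M) \ltimes U(\M)$, the task is to arrange both $U(\M)$-invariance and $T^{00}(\M)$-invariance of $\bar p$.

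Specializing hypothesis (ii) to $b = e$ and $f \equiv 1$ gives $\nu(t^m)$ unbounded over $\Gamma_K$ for each $m \in \z^n \setminus \{0\}$, which by the $n$-variable analog of Fact \ref{complete 1 types over Qp} says $\pi_{*}p = \tp(t/N)$ is $f$-generic on $T = \Gm^n$. By Fact \ref{f-g is defble} and Corollary \ref{Gm-n-is-dfg-Gmn00} it extends uniquely to a definable, $T^{00}(\M)$-invariant global type $\bar q \in S_T(\M)$. I would then verify by compactness that the partial type $\Sigma(x, y)$ obtained by adjoining to $p(x, y) \cup \bar q(x)$ the two $\M$-global schemes
\[
\phi(x, y) \leftrightarrow \phi(x, b^{x} y) \qquad (b \in U(\M),\ \phi \in L_\M),
\]
and ``$\nu(x^{m}) + \nu(f(b^{x} y))$ is unbounded over $\Gamma_\K$'' (for all $m \ne 0$, $b \in U(\M)$, $L_\M$-definable $f$), is consistent. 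A finite fragment involves only finitely many $\M$-parameters, which together with $N$ live in some small $N \prec N' \prec \M$; the fragment is then realized in a sufficiently saturated extension of $N'$ by applying (i), (ii) internally to $N'$. Any completion $\bar p \in S_G(\M)$ of $\Sigma$ is the desired global extension.

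$U(\M)$-invariance of $\bar p$ is immediate from the first scheme. For $T^{00}(\M)$-invariance, fix $a \in T^{00}(\M)$ and $(t^*, u^*) \models \bar p$; by Macintyre's quantifier elimination it suffices to check invariance under $t^* \mapsto a t^*$ of atomic formulas of the forms $P(x,y) = 0$, $\nu(P(x,y)) \le \nu(Q(x,y))$, and $P_n(P(x, y))$ with $P, Q \in \M[x, y]$. Writing $P(x, y) = \sum_m x^m P_m(y)$ and applying the unboundedness scheme in $\Sigma$ to each pair of distinct exponents $m, m'$ (with $b = e$ and $f = P_m/P_{m'}$), the monomial valuations $\nu((t^*)^m P_m(u^*))$ are pairwise distinct, so $\nu(P(t^*, u^*))$ is achieved by a unique dominant monomial. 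Since $\nu(a^m) \in \Gamma_\K$ is bounded against the unbounded monomial gaps, the dominant monomial survives $t^* \mapsto a t^*$, preserving valuation inequalities and ruling out nontrivial polynomial equalities in $\bar p$; the $P_n$-predicates are preserved because $a^m \in \bigcap_n P_n(\K^*)$ as $a \in T^{00}$. The main obstacle I anticipate is the compactness step in the second paragraph: since (i) is given only for $b \in U(N)$, upgrading it to $U(\M)$ requires a careful finite-satisfiability/elementarity argument extending $p$ to $N'$ while preserving both $U(N')$-invariance and the sharper unboundedness over $\Gamma_{N'}$.
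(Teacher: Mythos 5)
Your overall strategy (produce a global extension of $p$ that is $G^{00}(\M)$-invariant, using $G^{00}=T^{00}\ltimes U$, handling $U(\M)$-invariance by scheme (i) and $T^{00}(\M)$-invariance by a dominant-monomial analysis of each polynomial plus the fact that $\nu(a^m)\in\Gamma_\K$ is bounded and $a^m\in\bigcap_n P_n(\K^*)$) is exactly the paper's, and that part of the computation is sound. The problem is the step you yourself flag as the main obstacle: the construction of the global extension $\bar p$. Your compactness argument for the consistency of $\Sigma$ is not carried out, and as stated it is circular: to "apply (i), (ii) internally to $N'$" for a finite fragment with parameters in $N'\succ N$, you already need to know that $p$ has an extension to $N'$ satisfying (i) and (ii) with $b\in U(N')$, $f$ over $N'$, and unboundedness over $\Gamma_{N'}$ --- which is precisely what the compactness argument was supposed to deliver. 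So there is a genuine gap here.

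The missing idea, which is how the paper closes this, is to take $\bar p$ to be an \emph{heir} of $p$ over $\M$ rather than assembling it by hand. The point is that hypotheses (i) and (ii) are equivalent to schemes of conditions on the formulas belonging to $p$ itself: (i) says $(\phi(x,y)\leftrightarrow\phi(x,b^xy))\in p$ for all $\phi$ and $b\in U(N)$, and (ii) says that for each $m\neq 0$, $b\in U(N)$ and $N$-definable $f$, the set of formulas $\nu(x^m)+\nu(f(b^xy))>\nu(z)$ (or uniformly $<\nu(z)$) for $z\in K^*$ lies in $p$. Any heir of $p$ over $\M$ then satisfies the same schemes with parameters from $\M$: if some instance with $\M$-parameters failed in $\bar p$, the heir property would push the failure down to an instance with parameters in $N$, contradicting (i) or (ii). This gives, in one stroke, (i) for all $b\in U(\M)$ and (ii) with unboundedness over all of $\Gamma_\K$, after which your third paragraph goes through verbatim. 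Once you replace your second paragraph by this heir argument (and note that the auxiliary type $\bar q$ and the separate appeal to Fact \ref{f-g is defble} become unnecessary), the proof is complete and agrees with the paper's.
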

\begin{proof}
It is easy to see that condition (i) holds iff for any $\phi(x,y)$ and any $b\in U(N)$
\begin{equation}\label{condition 1}
(\phi(x,y)\leftrightarrow \phi(x,b^{x}y))\in p(x,y),
\end{equation}
and  condition (ii) holds iff for any $N$-definable function $f$, any $b\in U(N)$,  any $m\neq (0,...,0)$, and any $L_N$-formual $\phi(x,y)\in p$
\begin{equation}\label{condition 2}
\begin{split}
     & \text{either}\ \ \  N\models \forall z\bigg((z\neq 0)\rightarrow \exists x, y\big(\phi(x,y)\wedge(\nu(x^m)+\nu(f(x^{-1}bxy))>\nu(z))\big)\bigg)\ \  \\
     &\text{or}\ \ \ \ \ \ \ \  N\models \forall z\bigg((z\neq 0)\rightarrow \exists x, y\big(\phi(x,y)\wedge(\nu(x^m)+\nu(f(x^{-1}bxy))<\nu(z))\big)\bigg).
\end{split}
\end{equation}

Let $\tp(\bar t,\bar u/\M)$ be an heir of $\tp(t,u/N)$ over $\M$, where $\bar t\in T(\bar \M)$ and $\bar u\in U(\bar \M)$ with $\bar \M=(\bar \K,+,\times,0,1)$ an $|\M|^+$-saturated extension of $\M$. Then, by the definition of heir,  it is easy to see that (\ref{condition 1}) and (\ref{condition 2}) hold when we replace $t,u,N$ by $\bar t,\bar u,\M$ respectively. So condition (i) and (ii) hold if we replace $t,u,N$ by $\bar t,\bar u,\M$ respectively.

Now it suffices to show that $\tp(\bar t,\bar u/\M)$ is an $f$-generic type of $G$ over $\M$. Let $(a,b)\in G^{00}$, we now show  that
\[(a,b)\tp(\bar t,\bar u/\M)=\tp(a\bar t,b^{\bar t}\bar u/\M)=\tp(\bar t,\bar u/\M).\]
By quantifier elimination, we only need to check that for every polynomial $g(x,y)\in \K[x,y]$, and every $n$-th power $P_n$, we have
\begin{equation}\label{equ-iff}
\bar \M\models  P_n(g(\bar t,\bar u))\iff \bar \M\models P_n(g(a\bar t,b^{\bar t}\bar u)).
\end{equation}

Suppose that $g(x,y)=\sum_{i=1}^{k}g_i(y)x^{m_i}$, where $g_i\in \K[y]$, and $m_i=(m_{i1},...,m_{in})\in {\mathbb N}^n$.    Now $m_{i}\neq m_{j}$ whenever $i\neq j\leq k$. By condition (ii), we have
\[
\nu(\frac{g_j({b}^{\bar t}\bar u) {\bar t}^{m_j}}{g_i({b}^{\bar t}\bar u) {\bar t}^{m_i}})=\nu(\frac{g_j({b}^{\bar t}\bar u)}{g_i({b}^{\bar t}\bar u)}{\bar t}^{(m_j-m_i)})=\nu(\frac{g_j({b}^{\bar t}\bar u)}{g_i({b}^{\bar t}\bar u)})+\nu({\bar t}^{(m_j-m_i)})
\]
is unbounded over $\Gamma_\K$ for $i\neq j$. So there is unique ${j^*}\leq k$ such that
\begin{equation}\label{minimal-term-I'}
\nu(g_{j^*}({b}^{\bar t}\bar u) {\bar t}^{m_{j^*}})< \nu(g_i({b}^{\bar t}\bar u) {\bar t}^{m_i})+\Gamma_\K
\end{equation}
for all $i\neq {j^*}$. Since $\nu(a^m)\in \Gamma_\K$ is bounded for all $m\in {\mathbb Z}^{n}$, and
\[
\nu(g_{i}({b}^{\bar t}\bar u) {(a\bar t)}^{m_{i}})=\nu(g_{i}({b}^{\bar t}\bar u) {a}^{m_{i}}{\bar t}^{m_{i}})=\nu(g_{i}({b}^{\bar t}\bar u) {\bar t}^{m_{i}})+\nu({a}^{m_{i}})
\]
for all $i\leq k$, we see that
\begin{equation}\label{minimal-term-II}
\nu(g_{j^*}({b}^{\bar t}\bar u) {(a\bar t)}^{m_{j^*}})< \nu(g_i({b}^{\bar t}\bar u) {(a\bar t}^{m_i}))+\Gamma_\K
\ \ \    \text{for all}\  i\neq {j^*}.
\end{equation}
By condition (i) and (\ref{minimal-term-I'}), we have
\begin{equation}\label{minimal-term-I}
\nu(g_{j^*}(\bar u){\bar t}^{m_{j^*}})< \nu(g_i(\bar u){\bar t}^{m_i})+\Gamma_\K\ \ \    \text{for all}\  i\neq {j^*}.
\end{equation}
Moreover, $g_{j^*}(\bar u){\bar t}^{m_{j^*}}$ and $g_{j^*}(b^{\bar t}\bar u){\bar t}^{m_{j^*}}$ are in the same coset of $P_n({\bar \K}^*)$ as $\tp(g_{j^*}(\bar u){\bar t}^{m_{j^*}}/\M)=\tp(g_{j^*}(b^{\bar t}\bar u){\bar t}^{m_{j^*}}/\M)$. Let $\lambda\in \Q^*$ such that
\begin{equation}\label{same-coset}
\bar \M\models \lambda P_n(g_{j^*}(\bar u){\bar t}^{m_{j^*}}) \ \text{and}\  \ \bar \M\models  \lambda P_n(g_{j^*}(b^{\bar t}\bar u){\bar t}^{m_{j^*}}).
\end{equation}
We see that
\[
\bar \M\models  P_n(g(\bar t,\bar u))\iff \bar \M\models \lambda^{-1} P_n(\frac{g(\bar t,\bar u)}{g_{j^*}(\bar u){\bar t}^{m_{j^*}}})
\]
By (\ref{minimal-term-I}),
\[
\frac{g(\bar t,\bar u)}{g_{j^*}(\bar u){\bar t}^{m_{j^*}}}=1+\mu,
\]
with $\nu(\mu)>\Gamma_\K$. Namely $\frac{g(\bar t,\bar u)}{g_{j^*}(\bar u){\bar t}^{m_{j^*}}}$ is infinitesimal close to $1$ over $\K$. As $P_n(\Q^*)$ is open and $1\in P_n(\Q^*)$, there is a $\Q$-definable open neighborhood $\psi(\Q)$ of $1$ which is contained in $P_n(\Q^*)$. Clearly, $1+\mu\in \psi({\bar \K})\subseteq P_n({\bar \K}^*)$,  So we conclude that
\begin{equation}\label{Pn(lambda)-I}
\bar \M\models  P_n(g(\bar t,\bar u))\iff \bar \M\models \lambda^{-1} P_n(1+\mu)\iff \bar \M\models  P_n(\lambda)
\end{equation}
By (\ref{minimal-term-II}) and (\ref{same-coset}), a similarly argument showing that
\begin{equation}\label{Pn(lambda)-II}
\M\models P_n(g(a\bar t,b^{\bar t}\bar u))\iff \bar \M\models  P_n(\lambda).
\end{equation}
Now (\ref{Pn(lambda)-I}) and (\ref{Pn(lambda)-II}) implies (\ref{equ-iff}) as required.
\end{proof}

By Lemma \ref{tp(u,t) f-generic-II} and Lemma \ref{tp(u,t) f-generic-III}, we conclude directly that
\begin{Prop}\label{f-generics iff}
Let $N=(K,+,\times,0,1)$ be any elementary extension of $M$, and $\Gamma_K$ is the valuation group of $K$. If $t\in \Gm^{n}(\M)$ and $u\in U(\M)$. Then $\tp(t,u/N)$ is an $f$-generic type of $G$ if and only if the followings hold.
\begin{enumerate}
    \item [(i)] $\tp(t,u/N)=\tp(t,b^tu/N)$ for For any $b\in U(N)$
    \item [(ii)] For any $m\in {\mathbb Z}^n$, $b\in U(N)$,  and definable function $f$ definable over $N$, $\nu(t^m)+\nu(f(b^tu))$ is unbounded over $\Gamma_K$ whenever $m\neq (0,...,0)$.
\end{enumerate}
\end{Prop}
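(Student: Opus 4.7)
The plan is to use Theorem \ref{f-generic-type-has-bounded-orbit}(iii)—$f$-genericity over $\M$ is equivalent to $G^{00}$-invariance—combined with Corollary \ref{G-is-dfg-G00}, which gives $G^{00} = T^{00} \ltimes U$. I would first extend $\tp(t, u/N)$ to an heir $p = \tp(\bar t, \bar u/\M)$ over the monster; $f$-genericity passes between $p$ and its restriction, and the statements (i), (ii) are expressible by first-order schemata that transfer verbatim to $\M$. So the question reduces to: is $p$ invariant under $G^{00}$ iff (i) and (ii) hold? Recall the action is $(a, c) \cdot (t, u) = (at, c^t u)$.

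For the forward direction, invariance of $p$ under elements $(1, b)$ with $b \in U$ immediately gives (i). To get (ii), invariance under $(a, b) \in T^{00} \ltimes U$ yields $\tp(a\bar t, b^{\bar t} \bar u/\M) = \tp(\bar t, \bar u/\M)$ for every $a \in T^{00}$. If $\nu(\bar t^m) + \nu(f(b^{\bar t}\bar u))$ were bounded in $[c, d] \subseteq \Gamma_\K$, then the same bound would hold with $\bar t$ replaced by $a\bar t$, forcing $\nu(a^m) \in [c - d, d - c]$ for every $a \in T^{00}$; but $T^{00} = (\Gm^{00})^n$ with $\Gm^{00} = \bigcap_n P_n(\K^*)$ contains elements of arbitrary valuation in each coordinate, so for $m \neq 0$ the quantity $\nu(a^m)$ is unbounded in $\Gamma_\K$, a contradiction.

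For the converse, assume (i) and (ii). $U$-invariance of $p$ is immediate from (i). For $T^{00}$-invariance, Macintyre's quantifier elimination reduces the task to verifying, for every polynomial $g(x, y) = \sum_{i=1}^k g_i(y) x^{m_i}$ with distinct exponent vectors $m_i$, every $n \geq 1$, and every $(a, b) \in T^{00} \ltimes U$, the equivalence
\[
\bar \M \models P_n(g(\bar t, \bar u)) \iff \bar \M \models P_n(g(a\bar t, b^{\bar t} \bar u)).
\]
Applying (ii) to the coefficient ratios $g_j/g_i$ and exponent differences $m_j - m_i$ shows that the valuations of the terms $g_i(b^{\bar t}\bar u) \bar t^{m_i}$ are pairwise separated by amounts unbounded over $\Gamma_\K$; hence a unique index $j^*$ strictly dominates, and—because $\nu(a^m) \in \Gamma_\K$ is $\Gamma_\K$-bounded—it remains dominant after substituting $a\bar t$. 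Factoring out the dominant term, $g(\bar t, \bar u) = g_{j^*}(\bar u)\bar t^{m_{j^*}}(1 + \mu)$ with $\nu(\mu)$ exceeding all of $\Gamma_\K$; since $P_n(\Q^*)$ is open at $1$ in the $p$-adic topology, a $\Q$-definable neighborhood witnesses $1 + \mu \in P_n(\bar \K^*)$. So $P_n$-membership of $g(\bar t, \bar u)$ reduces to that of $g_{j^*}(\bar u)\bar t^{m_{j^*}}$, and the parallel reduction applies to $g(a\bar t, b^{\bar t}\bar u)$. Condition (i) finally gives $\tp(g_{j^*}(\bar u)\bar t^{m_{j^*}}/\M) = \tp(g_{j^*}(b^{\bar t}\bar u)\bar t^{m_{j^*}}/\M)$, placing the two dominant terms in the same $P_n$-coset and closing the argument.

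The main obstacle is the $P_n$-bookkeeping in the converse: verifying that the unique dominant index $j^*$ produced from (ii) is stable under both the multiplicative translation by $a \in T^{00}$ (absorbed because $\Gamma_\K$-bounded quantities are dwarfed by $\Gamma_\K$-unbounded separations) and the unipotent twist $u \mapsto b^{\bar t}u$ (absorbed by condition (i)), together with the placement of the infinitesimal factor $1+\mu$ inside $P_n(\bar \K^*)$ via openness at the identity. Once this dominant-term analysis is in hand, the remaining ingredients are standard Macintyre-style inputs.
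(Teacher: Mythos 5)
Your proposal is correct and follows essentially the same route as the paper, which splits the two directions into Lemmas \ref{tp(u,t) f-generic-II} and \ref{tp(u,t) f-generic-III}: the forward direction via $G^{00}$-invariance ($U\leq G^{00}$ for (i), and unboundedness of $\nu(a^m)$ for $a\in T^{00}$ for (ii)), and the converse via heir transfer, Macintyre quantifier elimination, the dominant-term analysis, and openness of $P_n$ at $1$. The only cosmetic caveat is that in the forward direction one should take an $f$-generic global extension (which exists and is $G^{00}$-invariant) rather than an arbitrary heir, exactly as the paper does.
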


\begin{Cor}\label{every heir is f-generic}
Let $N=(K,+,\times,0,1)$ be any elementary extension of $M$. If $p\in S_G(N)$ is $f$-generic, then every heir of $p$ over any $N'\succ N$ is $f$-generic.
\end{Cor}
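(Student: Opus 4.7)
The plan is to invoke Proposition~\ref{f-generics iff}, which characterizes $f$-genericity via two first-order conditions on a realization $(t,u)$ of $p$, and to show that both conditions transfer from $p\in S_G(N)$ to an arbitrary heir $p'\in S_G(N')$ by a uniform application of the heir property. Proposition~\ref{f-generics iff} applied over $N'$ then delivers that $p'$ is $f$-generic.

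For condition (i), suppose it fails at $p'$: there exist an $L$-formula $\phi(x,y,z_c)$, $c\in N'$, and $b\in U(N')$ with $\phi(x,y,c)\wedge\lnot\phi(x,b^{x}y,c)\in p'$. Form the $L$-formula
\[
\psi(x,y,z_c,z_b)\,:=\,\phi(x,y,z_c)\wedge\lnot\phi(x,z_b^{x}y,z_c)\wedge U(z_b).
\]
Because $b\in U(N')$, we have $\psi(x,y,c,b)\in p'$. The heir property then produces $c_0,b_0\in N$ with $\psi(x,y,c_0,b_0)\in p$; in particular $b_0\in U(N)$. But condition (i) for $p$, applied to $\phi(x,y,c_0)$ and $b_0$, forces $\phi(x,y,c_0)\leftrightarrow\phi(x,b_0^{x}y,c_0)\in p$, contradicting $\psi(x,y,c_0,b_0)\in p$.

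For condition (ii), fix $\phi(x,y,c)\in p'$, $b\in U(N')$, $0\neq m\in \mathbb Z^n$, and an $N'$-definable function $f=f_d$ with $d\in N'$, and suppose the required unboundedness fails. By the reformulation (\ref{condition 2}) in Lemma~\ref{tp(u,t) f-generic-III}, this failure is precisely the falsity in $N'$ of the single $L_N$-sentence $\Theta(c,b,d)$, where $\Theta(z_c,z_b,z_d)$ is the disjunction of the two alternatives in (\ref{condition 2}), uniformly parameterized by $z_c,z_b,z_d$. Set
\[
\psi(x,y,z_c,z_b,z_d)\,:=\,\phi(x,y,z_c)\wedge U(z_b)\wedge\lnot\Theta(z_c,z_b,z_d).
\]
Then $\psi(x,y,c,b,d)\in p'$, so by the heir property there exist $c_0,b_0,d_0\in N$ with $\psi(x,y,c_0,b_0,d_0)\in p$. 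Hence $\phi(x,y,c_0)\in p$, $b_0\in U(N)$, and $f_{d_0}$ is $N$-definable, while $N\models\lnot\Theta(c_0,b_0,d_0)$, contradicting condition (ii) for $p$.

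The content is entirely formula-chasing; the only point of care is to fold the side condition $U(z_b)$ (and, implicitly, the total-definedness of $f_{z_d}$) into $\psi$, so that the parameters the heir property returns come back in the correct sorts. That is what allows conditions (i) and (ii) for $p$ to be invoked at the retrieved parameters, and completes the argument.
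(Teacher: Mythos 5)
Your proof is correct and takes essentially the same route as the paper: the paper likewise reduces the corollary to transferring conditions (i) and (ii) of Proposition~\ref{f-generics iff} to the heir via their first-order reformulations (\ref{condition 1}) and (\ref{condition 2}) inside Lemma~\ref{tp(u,t) f-generic-III}. You merely spell out, by contraposition and pulling parameters back through the heir property, the step the paper dismisses as ``easy to see by the definition of heir.''
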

\begin{proof}
Let $p(x,y)=\tp(t,u/N)$ with $t\in T$ and $u\in U$.  By  Proposition \ref{f-generics iff}, we have
\begin{enumerate}
    \item [(i)] $\tp(t,u/N)=\tp(t,b^tu/N)$ for For any $b\in U(N)$
    \item [(ii)] For any $m\in {\mathbb Z}^n$, $b\in U(N)$,  and definable function $f$ definable over $N$, $\nu(t^m)+\nu(f(b^tu))$ is unbounded over $\Gamma_K$ whenever $m\neq (0,...,0)$.
\end{enumerate}

Let $N'\succ N$, $\bar t\in T$ and $\bar u\in U$ such that $\tp(\bar t,\bar u/N')$ is an heir of $p$ over $N'$. Then, as we proved in Lemma \ref{tp(u,t) f-generic-III}, conditions (i) and (ii) hold when we replace $t,u,N$ by $\bar t,\bar u,N'$ respectively. So $\tp(\bar t,\bar u/N')$ is $f$-generic over $N'$ by Proposition \ref{f-generics iff}.
\end{proof}

\begin{Theorem}\label{Main Thm}
Let $N=(K,+,\times,0,1)$ be any elementary extension of $M$. Then $p\in S_G(N)$ is $f$-generic if and only if $p$ is almost periodic.
\end{Theorem}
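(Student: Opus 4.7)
The plan is to set up the theorem as an assembly of pieces already in place: Proposition \ref{f-generics iff}, Corollary \ref{every heir is f-generic}, Lemma \ref{every heir f-generic imp almost periodic}, and Fact \ref{restriction of ap is ap}. The substantive direction is $f$-generic $\Rightarrow$ almost periodic; the reverse direction is essentially immediate from general facts and deserves only a sentence. First I would point out that, since $G$ is trigonalizable over $\Q$, Corollary \ref{G-is-dfg-G00} gives that $G$ has \emph{dfg}, so in particular $G$ is definably amenable, and the ambient theory $\Th(\Q,+,\times,0,1)$ has $NIP$. Hence Theorem \ref{f-generic-type-has-bounded-orbit}(i) applies: weakly generic coincides with $f$-generic. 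Combined with Fact \ref{WG sq AG}, which says $\mathrm{AP}\subseteq \mathrm{WG}=\cl(\mathrm{AP})$, this immediately gives almost periodic $\Rightarrow$ $f$-generic.

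For the forward direction, given $p\in S_G(N)$ $f$-generic, the strategy is extend-then-restrict. I would first invoke the existence of global $f$-generic extensions (the fact stated right after Theorem \ref{f-generic-type-has-bounded-orbit}, using that weakly generic subsets form an ideal) to produce some $\bar p\in S_G(\M)$ that is $f$-generic and extends $p$. Next, by Corollary \ref{every heir is f-generic} applied to $\bar p$, every heir of $\bar p$ over any $|\M|^+$-saturated $\bar\M\succ\M$ is again $f$-generic. This is precisely the hypothesis of Lemma \ref{every heir f-generic imp almost periodic}, which then gives that $\bar p$ is almost periodic as a point of the $G(\M)$-flow $S_G(\M)$. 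Finally, Fact \ref{restriction of ap is ap} descends almost periodicity from $\M$ to $N$, so $p=\bar p|N\in S_G(N)$ is almost periodic, as required.

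The real obstacle here is not in the Theorem itself but in its inputs, which have already been carried out in the preceding subsections. The key technical step is Proposition \ref{f-generics iff}: the concrete valuation-theoretic description of $f$-genericity in terms of the two conditions on $\tp(t,u/N)$, particularly the unboundedness condition on $\nu(t^m)+\nu(f(b^t u))$ for nonzero $m\in\mathbb Z^n$. Because those conditions are syntactic (parameter-replacement invariant in a controlled way), they transfer to heirs and yield Corollary \ref{every heir is f-generic}; that corollary is the linchpin that lets the clean two-step extend/restrict argument succeed. Without such a stable heir-invariance, one could extend $p$ to a global $\bar p$ but would have no way to feed Lemma \ref{every heir f-generic imp almost periodic}. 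So in effect, the proof of Theorem \ref{Main Thm} I would write is just a short formal assembly: extend $p$ to global $f$-generic $\bar p$, apply Corollary \ref{every heir is f-generic} to verify the heir hypothesis, apply Lemma \ref{every heir f-generic imp almost periodic} to get $\bar p$ almost periodic, then apply Fact \ref{restriction of ap is ap} to push back to $N$.
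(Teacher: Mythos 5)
Your proof is correct and follows essentially the same route as the paper: the only cosmetic difference is that the paper takes $p'$ to be an arbitrary heir of $p$ over $\M$ (which is $f$-generic by Corollary \ref{every heir is f-generic}), whereas you produce a global $f$-generic extension via the ideal argument; either way one feeds Corollary \ref{every heir is f-generic}, Lemma \ref{every heir f-generic imp almost periodic}, and Fact \ref{restriction of ap is ap} in the same order. The converse direction is likewise handled as in the paper, via Fact \ref{WG sq AG} and the identification of weakly generic with $f$-generic types.
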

\begin{proof}
Clearly, every almost periodic type is $f$-generic by Fact \ref{WG sq AG}.

Let $p=\tp(t,u/N)$ be an $f$-generic type of $G$ over $N$. Let  $p'\in S_G(\M)$ be any heir of $p$. Then, By Corollary \ref{every heir is f-generic}, $p'$ is $f$-generic and every heir of $p'$ is $f$-generic. By Lemma \ref{every heir f-generic imp almost periodic}, $p'$ is almost periodic. By Fact \ref{restriction of ap is ap}, $p=p'|N$ is almost periodic.
\end{proof}

\begin{Question}
Let $U$ and $T$ are as above. By Fact \ref{f-g is defble}, every $f$-generic type of $T$ over any model $N$ is definable over $\emptyset$. Is it ture that every $f$-generic type of $U$ is definable over $\emptyset$?
\end{Question}

\newpage

\end{document}